\documentclass[journal]{IEEEtran}
\usepackage{amsfonts}

\usepackage{amsmath}
\usepackage{amssymb}
\usepackage{xcolor}
\usepackage{graphicx}
\usepackage{cite}
\usepackage{array}
\usepackage{subfigure}
\usepackage{booktabs}
\usepackage{enumerate}
\usepackage{color}
\usepackage{algpseudocode}
\usepackage{epstopdf}
\usepackage{ntheorem}
\usepackage{pifont}
\usepackage{graphicx}
\usepackage{setspace}

\ifCLASSINFOpdf
\else
\fi
\newtheorem{theorem}{Theorem}

\newtheorem{assumption}{Assumption}

\newtheorem{proposition}{Proposition}
\newtheorem{remark}{Remark}

\newtheorem{definition}{Definition}
\newenvironment{proof}[1][Proof]{\noindent \textbf{#1.} }{\  \rule{0.5em}{0.5em}}
\ifCLASSINFOpdf \else \fi
\allowdisplaybreaks[4]

\begin{document}
\begin{spacing}{1.0}
\title{Fixed-time stabilization of systems with nilpotent matrices in the presence of large unknown delays}
\author{Kang-Kang Zhang, Emilia Fridman, \textit{Fellow, IEEE}, and Pengfei Wang
\thanks{%
This work was supported in part by the Israel Science Foundation
under Grant 446/24, in part by the Chana and Heinrich Manderman
Chair at Tel Aviv University, and in part by the Azrieli International Postdoctoral Fellowship. \textit{(Corresponding author: Pengfei Wang)}}
\thanks{The authors are with the School of Electrical and Computer Engineering, Tel Aviv University, 6997801, Israel (e-mail: kangkang\_kkz@163.com; emilia@tauex.tau.ac.il; wangpengfei1156@hotmail.com)}
}
\maketitle

\begin{abstract}
In this paper, we study state-feedback fixed-time stabilization of continuous-time perturbed linear  systems with nilpotent matrices in the presence of unknown large constant input/measurement delays with known bounds. By using an additional artificial delay, we design a feedback that for a precisely known linear system with a nilpotent matrix drives the state to zero in fixed time (which is larger than the bound of the unknown delay) and achieves input-to-state stability with a large exponential decay rate in the presence of additive disturbances and small system matrix perturbations and nonlinearities. For perturbed single integrators, fixed-time stabilization is achieved in the case of fast-varying unknown measurement delays with known bounds. The results are extended to discrete-time systems. Numerical simulations illustrate the efficiency of the results.
\end{abstract}

\markboth{Paper}{Shell
\MakeLowercase{\textit{et al.}}: Bare Demo of IEEEtran.cls for IEEE Journals}

\begin{IEEEkeywords}
Unknown constant delays;
Uncertain linear systems;
Fixed-time input-to-state stabilization;
Artificial delay.
\end{IEEEkeywords}
\IEEEpeerreviewmaketitle

\section{Introduction}

Due to the inevitable presence of time delays in practical physical systems, control problems for time-delay systems have attracted significant attention over the past several decades \cite{emilia14book,Gu03book,Michels07book,zhou12auto}. As a result, a considerable amount of research has been devoted to addressing systems with large delays. Various predictor methods handle large delays \cite{Krstic09book,emilia25survey}. However, conventional predictor feedback usually requires the delay to be exactly known \cite{Art03tac,Krstic10tac}, while PDE delay-adaptive control methods involve considerable computational complexity and are limited to asymptotic convergence \cite{Zhu16tac}. Moreover, predictor feedback is also prone to numerical issues during implementation \cite{Furtat17tac,kara17book,Mondi04tac}.

Compared with traditional asymptotic stability or exponential stability,
finite-time control has garnered extensive attention in the control community
due to its faster convergence speed \cite{Bhat00siam}. Subsequently, a novel form of
finite-time control, called fixed-time control, was introduced in
\cite{Polyakov11tac}, where the upper bound of the convergence time is a
constant that is independent of the initial state.
Recently, a novel smooth fixed-time (FT) method utilizing artificial delay has been
proposed in \cite{Zhou21tac}, where the convergence time depends solely on the
designed artificial delay \cite{Kara06siam,Michiels20scl}. Further, a
Lyapunov-based approach for prescribed-time (in any prescribed time) stabilization by using
artificial delay was established in \cite{Ding23tac}. By using such artificial
delay, the prescribed-time control problem for nonholonomic integrator with
input delays was studied in \cite{Ding26auto}.

Since actual systems are always subject to external disturbances and internal
uncertainties, conducting an input-to-state stability (ISS) analysis is an
important task \cite{Stang95ijc}. Using the time-delay approach, the
conditions of ISS were investigated in \cite{Efimov23tac} for nonlinear
input-affine systems with respect to averaged value of the disturbances. A
new methodology to analyze ISS of nonlinear systems with time-varying delays
based on the use of nonlinear matrix inequalities was proposed in
\cite{emilia08auto}. Further, fractional power method based finite-time
input-to-state stability and fixed-time input-to-state stability were proposed
in \cite{hong10siam} and \cite{Efimov24siam}, respectively. Using the
trajectory-based approach proposed in \cite{Mazen14tac}, fixed-time
stabilization estimates using sampled controls for continuous-time
time-varying linear systems were given in \cite{Mazenc25tac}, and fixed-time
stabilization estimates for discrete-time linear systems were
given in \cite{Mazenc24css}.

In this paper, by using artificial delay, we study the fixed-time
input-to-state stabilization problem for perturbed linear systems with
nilpotent matrices under unknown constant delays in the full-state measurement or input. Here, we employ
an act-and-wait type controller structure, which was introduced in
\cite{Insp06tcst} and was used to achieve fixed-time control in
\cite{Zhou21tac,Michiels20scl} for linear systems with known input delay. Differently from \cite{Zhou21tac,Michiels20scl}, where the act time and wait time in the controller are the same, we adopt act and wait times of different
lengths to avoid the impact of measurement delays. When the linear system is known
exactly (except of input/measurement delays), it is shown that the state can converge
to zero in fixed time, and the convergence time depends on the system and the upper bound of the unknown time delay. In the presence of additive disturbances and small system uncertainties, it can be proven that the closed-loop systems are fixed-time ISS in the
sense of \cite{Mazenc25tac}. Further, the results are extended to
discrete-time systems. Compared with existing methods, the proposed controller offers two distinct advantages. On the one hand, in terms of convergence rate, compared with the low gain method \cite{zhou09tac}, which has a low convergence rate for arbitrarily large delays, and the PDE delay-adaptive control method \cite{Zhu16tac}, which is limited to asymptotic convergence, the controller proposed in this paper has a faster convergence rate and can achieve fixed-time stability. On the other hand, in terms of implementation, in contrast to traditional predictor methods \cite{Krstic10tac,Zhu16tac}, the controller proposed herein is finite-dimensional and concise in form, while accommodating unknown constant delays that can be arbitrarily large as long as their upper bounds are known.


\textbf{Notation} \textit{For any integers }$p$\textit{ and }$q$\textit{ with
}$p\leq q$\textit{, }$\mathbb{I}_{[p,q]}$ denotes the set $\{p,p+1,\ldots
,q\}$. $%
\mathbb{Z}
_{0}^{+}$ denotes the sets of nonnegative integers\textit{. } \(\mathbb N\) denotes the set of positive integers\textit{. }$|\cdot|$\textit{
denotes the usual Euclidean norm on }$\mathbb{R}^{n}$\textit{ and the
corresponding induced matrix operator norm. }$|\cdot|_{S}$\textit{ denotes the
supremum over any interval }$S$\textit{ in this norm. }$\lfloor a\rfloor
$\textit{ }is the integer part of $a$.

\section{\label{sec2}Continuous-Time FT control}
To facilitate the subsequent controller design and stability analysis, we first introduce the weighting function $S(t)$ and recall the definition of fixed-time ISS.

\begin{definition}
\label{def1}\cite{Zhou21tac} A function $S:\mathbb{R}_{\geq0}\rightarrow\mathbb{R}$ is said to be an
admissible weighting function if it satisfies: 1). $S(t)$ is continuously differentiable and $3h$-periodic;
2). $S(t)\geq0$ for all $t\in[2h,3h]$, and there exists
$h_{\ast}\in(2h,3h)$ such that $S(h_{\ast})>0$;
3). $S(2h)=S(3h)=0$ and $S'(2h)=S'(3h)=0$.
\end{definition}
\begin{definition}\cite{Mazenc25tac}
\label{def2}Consider the time-delay system $\dot{x}(t)=f(t,x_{t},\delta(t))$,
where $x(t)\in\mathbb{R}^{n}$ and $\delta$ is piecewise continuous and
locally bounded. The system is said to be fixed-time input-to-state
stable (which is also abbreviated by ISS) if there exist a function $\varphi\in\mathcal{K}$ and constants
$T_{1}\geq T_{2}\geq0$ such that, for every admissible initial function
and every $\delta$, the corresponding solution is uniquely determined
and satisfies
$|x(t)|
\leq
\varphi(
|\delta|_{[t-T_{2},t]}
), t\geq T_{1}$.
\end{definition}
\subsection{Unknown Constant Delay: Nominal Case}

Consider the following linear continuous-time system
\begin{equation}
\left\{
\begin{array}
[c]{rl}%
\dot{x}(t) & =Ax(t)+B_{0}u(t),\\
y(t) & =x(t-\tau_{0}),
\end{array}
\right.  \label{sys_norm}%
\end{equation}
where $x(t)\in\mathbb{R}^{n}$, $A\in\mathbb{R}^{n\times n}$, $B_{0}%
\in\mathbb{R}^{n\times m}$, and $\tau_{0}\in\lbrack0,\tau_{M}]$ is an unknown
constant delay with known upper bound $\tau_{M}>0$. Here without loss of
generality, we set $y(s)=0$ for $s\leq0$.

\begin{assumption}
\label{ass0}$(A,B_{0})$ is controllable and $A$ is nilpotent with nilpotency
index $q\leq n$, namely, $A^{q}=0$ and $A^{q-1}\neq0$.
\end{assumption}
The controllability of $(A,B_{0})$ in Assumption \ref{ass0} can be verified by the
Kalman rank condition
\[
\operatorname{rank} \bigl[ B_0, AB_0, \ldots, A^{n-1}B_0 \bigr]
=
\operatorname{rank} \bigl[ B_0, AB_0, \ldots, A^{q-1}B_0 \bigr].
\]
Let $h\geq\tau_{M}$ be an artificial delay. Denote
\[
g\left(  t\right)  =\left\{
\begin{array}
[c]{cl}%
0, & t\in\lbrack3lh,(3l+2)h],\\
1, & t\in((3l+2)h,(3l+3)h),
\end{array}
\right.
\]
where $l\in%
\mathbb{Z}
_{0}^{+}$. Since the pair $(A,B_{0})$ is controllable, from Theorem 1 in
\cite{Zhou21tac} it follows that
\[
W_{\mathrm{c}}=\int_{2h}^{3h}\mathrm{e}^{-As}B_{0}S(s)B_{0}^{\mathrm{T}}\mathrm{e}^{-A^{\mathrm{T}}s}\mathrm{d}s,
\]
is invertible.

We present a controller leading to fixed-time stabilization in the following theorem:
\begin{theorem}
Consider system (\ref{sys_norm}) under Assumption \ref{ass0}. Given $h\geq
\tau_{M}$, consider the smooth time-varying controller
\begin{equation}
u(t)=-K\left(  t\right)  S(t)g\left(
t\right)  y\left(  t-h\right)  ,\quad t\geq0, \label{vv}%
\end{equation}
where $l=\lfloor t/(3h)\rfloor$ and
\begin{equation}
K\left(  t\right)  =B_{0}^{\mathrm{T}}\mathrm{e}^{-A^{\mathrm{T}}\left(
t-3lh\right)  }W_{\mathrm{c}}^{-1}\mathrm{e}^{-A\left(  t-3lh-h\right)  }.
\label{K_n}%
\end{equation}
Then the closed-loop system (\ref{sys_norm}) and (\ref{vv}) satisfies the
condition that $x(t)=0$ and $u(t)=0$ for $t\geq3qh$, meaning, fixed-time stabilization.
\end{theorem}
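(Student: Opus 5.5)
The plan is to use the act-and-wait structure to reduce the closed loop to a linear recursion on the sampled states $x_l:=x(3lh)$, and then to use the nilpotency of $A$ to show that this recursion annihilates every initial condition within $q$ periods. Existence and uniqueness are immediate by the method of steps, since $u$ on $[3lh,3(l+1)h]$ depends only on $x$ at earlier times. On each period $[3lh,(3l+3)h]$ the factor $g$ vanishes on $[3lh,(3l+2)h]$, so there $u=0$ and the state evolves freely:
\[
x(s)=\mathrm{e}^{A(s-3lh)}x_l,\qquad s\in[3lh,(3l+2)h].
\]

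The key observation concerns the active window $t\in((3l+2)h,(3l+3)h)$. There $y(t-h)=x(t-h-\tau_0)$, and since $0\leq\tau_0\leq\tau_M\leq h$,
\[
t-h-\tau_0\in(3lh+h-\tau_0,\,3lh+2h)\subset[3lh,(3l+2)h].
\]
So the delayed measurement always samples the free-evolution segment of the same period. For $l=0$ the argument is nonnegative, so the convention $y(s)=0$ for $s\leq0$ never enters. Hence $y(t-h)=\mathrm{e}^{A(t-3lh-h-\tau_0)}x_l$. The factor $\mathrm{e}^{-A(t-3lh-h)}$ in \eqref{K_n} cancels the known part of this exponential, leaving
\[
K(t)y(t-h)=B_0^{\mathrm T}\mathrm{e}^{-A^{\mathrm T}(t-3lh)}W_{\mathrm c}^{-1}\mathrm{e}^{-A\tau_0}x_l .
\]
The unknown delay thus survives only as the constant matrix $\mathrm{e}^{-A\tau_0}$.

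Next I integrate over the active window using variation of constants. The $3h$-periodicity of $S$ gives $S(t)=S(t-3lh)$, so the substitution $s=t-3lh$ reproduces exactly the integral defining $W_{\mathrm c}$. This yields
\[
x_{l+1}=\mathrm{e}^{3Ah}\bigl(I-\mathrm{e}^{-A\tau_0}\bigr)x_l=:Mx_l .
\]
Since $A$ is nilpotent, $I-\mathrm{e}^{-A\tau_0}=\sum_{k=1}^{q-1}(-1)^{k+1}\tau_0^kA^k/k!=AP$, where $P$ is a polynomial in $A$. Therefore $M=A\,\mathrm{e}^{3Ah}P$ with all factors commuting, and $M^q=A^q(\mathrm{e}^{3Ah}P)^q=0$. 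Consequently $x(3qh)=x_q=0$ for every $\tau_0\in[0,\tau_M]$ and every initial condition. This step is the heart of the argument. The main point to verify carefully is that exact cancellation of the unknown $\tau_0$ is impossible, but it is also unnecessary: the residual factor $I-\mathrm{e}^{-A\tau_0}$ is itself nilpotent. This is precisely where Assumption~\ref{ass0} is used beyond mere invertibility of $W_{\mathrm c}$.

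Finally I propagate the zero state forward by induction over periods. On $[3qh,(3q+2)h]$ we have $u=0$ and $x_q=0$, so $x\equiv0$ there. On the following active window, $y(t-h)$ samples only that zero segment, so $u=0$ and $x$ stays zero. Repeating period by period gives $x(t)=0$ and $u(t)=0$ for all $t\geq3qh$, which is the claimed fixed-time stabilization, with settling time independent of both the initial condition and $\tau_0$.
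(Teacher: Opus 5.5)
Your proposal is correct and follows essentially the same route as the paper: free evolution on the wait interval, cancellation of the known part of the delayed exponential so that only $\mathrm{e}^{-A\tau_0}$ survives, the period map $x((3l+3)h)=\mathrm{e}^{3Ah}(I_n-\mathrm{e}^{-A\tau_0})x(3lh)$ with nilpotent matrix, and then forward propagation of the zero state. You are slightly more explicit than the paper in two places: you check that the delayed argument always lies in the same period's wait interval (so the convention $y(s)=0$ never enters), and you note that $\mathrm{e}^{3Ah}$ commutes with $I_n-\mathrm{e}^{-A\tau_0}$, which the paper uses implicitly when it passes from $(I_n-\mathrm{e}^{-A\tau_0})^q=0$ to $x(3qh)=0$.
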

\begin{proof}
For $t\in\lbrack3lh,(3l+2)h]$, $l\in%
\mathbb{Z}
_{0}^{+}$, we have $g(t)=0$. Then the closed-loop system (\ref{sys_norm}) and
(\ref{vv}) can be represented as $\dot{x}(t)=Ax(t)$, whose solution is given
by
\begin{equation}
x(t)=\mathrm{e}^{A(t-3lh)}x(3lh). \label{11}%
\end{equation}
For $t\in((3l+2)h,(3l+3)h]$, $l\in%
\mathbb{Z}
_{0}^{+}$, using $g(t)=1$, then the closed-loop system is expressed as
\begin{equation}
\dot{x}(t)=Ax(t)-B_{0}S(t)K\left(
t\right)  x\left(  t-h-\tau_{0}\right)  . \label{12}%
\end{equation}
By using the fact that $t-h-\tau_{0}\in(3lh,(3l+2)h]$, substituting (\ref{11})
into (\ref{12}), and employing the variation of constants formula, we have the
solution to (\ref{12}):
\begin{equation}
x\left(  t\right)  =\mathrm{e}^{A\left(  t-3lh\right)  }\mathit{\Delta
}(t)x\left(  3lh\right)  , \label{6}%
\end{equation}
where
\begin{align}
\mathit{\Delta}(t)=  I_{n}-\int_{2h}^{t-3lh}\mathrm{e}^{-As}B_{0}S(s) B_{0}^{\mathrm{T}}\mathrm{e}^{-A^{\mathrm{T}}s}\mathrm{d}%
sW_{\mathrm{c}}^{-1}\mathrm{e}^{-A\tau_{0}}, \label{delta}%
\end{align}
which implies that
$x((3l+3)h)  =\mathrm{e}^{3Ah}(  I_{n}-\mathrm{e}^{-A\tau_{0}})  x(
3lh)$
By iterating this, we obtain
\begin{equation}
x\left(  3hl\right)  =\left(  \mathrm{e}^{3Ah}\left(  I_{n}-\mathrm{e}%
^{-A\tau_{0}}\right)  \right)  ^{l}x\left(  0\right)  .\label{x_3lh00}%
\end{equation}
Since $A^{q}=0$, we have
$I_{n}-\mathrm{e}^{-A\tau_{0}}=A\tau_{0}-(A\tau_{0})^{2}/2!+\cdots-((-A\tau_{0})  ^{q-1})/(q-1)!,
$
which implies that $(I_{n}-\mathrm{e}^{-A\tau_{0}}) ^{q}=0$, and thus $x(3hq)=0$. From (\ref{11}) and (\ref{6}) with $l=q$, it follows $x(t)=\mathrm{e}^{A(t-3qh)}x(3qh)$ for $t\in(3qh,(3q+2)h]$ and $x(t)=\mathrm{e}^{A(t-3qh)}\mathit{\Delta}(t)x(3qh)=0$ for $t\in((3q+2)h,(3q+3)h]$ and $u(t)=-K(t)S(t)y(t-h)=-K(t)S(t)x(t-\tau_{0}-h)=0$.
Repeating the above process, for all $t\geq3qh$ we can also obtain that
$x(t)=0$ and $u(t)=0$.
\end{proof}
\begin{remark}
Assumption~1 is the same as in~\cite{zhou09tac}. System~(\ref{sys_norm}) under Assumption~\ref{ass0} can be transformed into a collection of integrator chains via an invertible state transformation. The reason we adopt the form of system~(1) in this paper is to provide a more direct approach that requires no transformation. Moreover, later sections demonstrate that under perturbations of system~(1), the designed controller achieves exponential convergence, while the controller in~\cite{zhou09tac} only attains asymptotic convergence under exact matching of $A$ and $B$.
\end{remark}
\begin{remark}
The global fixed-time stabilization result above is established without
actuator constraints. Since the controller in (\ref{vv}) is linear in the
delayed state measurement, its amplitude generally increases with the
magnitude of the initial state. Therefore, exact global fixed-time
stabilization for arbitrarily large initial conditions should be understood
as an idealized theoretical property when fixed actuator limits are imposed.

Nevertheless, for any prescribed compact set $\mathcal{X}_{0}$ of initial
conditions, the corresponding control input is uniformly bounded. In
particular, a conservative bound $u_{M}>0$ satisfying
\[
\sup_{\substack{x(0)\in\mathcal{X}_{0}\\
\tau_{0}\in[0,\tau_{M}]}}
\sup_{t\geq0}|u(t)|
\leq u_{M}
\]
can be evaluated in advance from the system parameters, the controller, and
$\mathcal{X}_{0}$. If $u_{M}$ does not exceed the available actuator limit,
the fixed-time conclusion of Theorem~\ref{sys_norm} remains valid throughout
this operating region. Its practical significance is that the settling-time
bound $3qh$ is uniform with respect to all initial conditions in
$\mathcal{X}_{0}$.
\end{remark}

\subsection{Unknown Constant Delay: Perturbed Case}

Consider the following perturbed continuous-time system
\begin{equation}
\left\{
\begin{array}
[c]{rl}%
\dot{x}(t) & =Ax(t)+Bu(t)+f\left(  t,x(t)\right)  +w_{1}\left(  t\right)  ,\\
y(t) & =x(t-\tau_{0})+w_{2}\left(  t\right)  ,
\end{array}
\right.  \label{sss1}%
\end{equation}
where $w_{1}(t)$ and $w_{2}(t)$ are piecewise continuous and locally bounded disturbances. Function
$f(t,x)$ is unknown, continuous in its first argument and locally Lipschitz in
its second argument and is subject to sector bound $|f(t,x(t))|\leq
f_{0}|x(t)|$ where $f_{0}\geq0$ is known. $B$ has the form $B=B_{0}+\mathit{\Delta
}B$, where $B_{0}$ is constant, known and such that $|\mathit{\Delta}%
B|\leq\varepsilon$ where $\varepsilon\geq0$ is known.

Denote
\[
\begin{array}{ll}
\rho_{0}(l)  &=  \eta\left\vert F^{l}\right\vert ,\\
\rho_{1}  &=  \alpha_{1}\eta\sup_{l\in%
\mathbb{N}
}\left\{  \sum_{i=0}^{l-1}\,\left\vert F^{i}\right\vert \mathrm{e}%
^{3\left\vert A\right\vert h}\right\} \\
&  +\mathrm{e}^{3\left\vert A\right\vert h}\int_{0}^{3h}\mathrm{e}^{\left\vert
A\right\vert s}\mathrm{d}s+\mathrm{e}^{3\left\vert A\right\vert h}\alpha_{1},
\end{array}
\]
in which $s=t-3lh$, and
$\mathit{\Delta}(3lh+s)$ and $\mathit{\Theta}(3lh+s)$ below are independent
of $l$,
\begin{align}
&\eta=
\begin{array}{ll}
	\sup\limits_{s\in\lbrack2h,3h]}
	\left\{\mathrm{e}^{3|A|h}
	+\mathrm{e}^{3|A|h}|\mathit{\Delta}(3lh+s)|
	+|\mathit{\Theta}(3lh+s)|\right\}
\end{array},\nonumber\\
&\alpha_{1}=
\begin{array}{ll}
	\int_{2h}^{3h}|\mathrm{e}^{-As}BS(s)
	B_{0}^{\mathrm{T}}\mathrm{e}^{-A^{\mathrm{T}}s}
	W_{\mathrm{c}}^{-1}\mathrm{e}^{-A(s-h)}|
\end{array}\nonumber\\
&\qquad\begin{array}{ll}
	\times\int_{0}^{s-h-\tau_{0}}
	|\mathrm{e}^{A(s-h-\tau_{0}-r)}|\mathrm{d}r\mathrm{d}s
	+\int_{0}^{3h}\mathrm{e}^{|A|s}\mathrm{d}s,
\end{array}\label{alpha_01}\\
&\mathit{F}=
\begin{array}{ll}
	\mathrm{e}^{3Ah}(I_{n}-\mathrm{e}^{-A\tau_{0}})
	-\mathrm{e}^{3Ah}\int_{2h}^{3h}
	\mathrm{e}^{-As}\mathit{\Delta}B S(s)B_{0}^{\mathrm{T}}
\end{array}\nonumber\\
&\qquad\begin{array}{ll}
	\times \mathrm{e}^{-A^{\mathrm{T}}s}\mathrm{d}sW_{\mathrm{c}}^{-1}
	\mathrm{e}^{-A\tau_{0}}
\end{array},\label{F_def}\\
&\mathit{\Delta}(3lh+s)=
\begin{array}{ll}
	I_{n}-\int_{2h}^{s}\mathrm{e}^{-Ar}B_{0}
	S(r)B_{0}^{\mathrm{T}}\mathrm{e}^{-A^{\mathrm{T}}r}
	\mathrm{d}r
\end{array}\nonumber\\
&\qquad\begin{array}{ll}
	\times \,W_{\mathrm{c}}^{-1}\mathrm{e}^{-A\tau_{0}},
\end{array}\label{delta}\\
&\mathit{\Theta}(3lh+s)=
\begin{array}{ll}
	-\int_{2h}^{s}\mathrm{e}^{A(s-r)}
	\mathit{\Delta}B S(r)B_{0}^{\mathrm{T}}\mathrm{e}^{-A^{\mathrm{T}}r}
	\mathrm{d}r
\end{array}\nonumber\\
&\qquad\begin{array}{ll}
	\times \,W_{\mathrm{c}}^{-1}\mathrm{e}^{-A\tau_{0}}
\end{array}.\label{ceta_c}
\end{align}

\begin{assumption}
\label{ass2}Assumption \ref{ass0} is satisfied and
\[
\rho(l)=\rho_{0}(l)+\rho_{1}f_{0}<1,
\] holds for all $\tau_{0}\in\lbrack0,\tau_{M}]$ and $l\geq q$.
\end{assumption}

\begin{remark}
\label{remark_1}
The above assumptions are similar to Assumption~3 in
\cite{Mazenc25tac}. We next provide conservative but computable sufficient
bounds on $\varepsilon$ and $f_{0}$ that guarantee
$\rho(l)<1$ for all $l\geq q$.

Let
$F_{0}(\tau_{0})=
\mathrm{e}^{3Ah}
\left(
I_{n}-\mathrm{e}^{-A\tau_{0}}
\right)$.
Since $A^{q}=0$ and the two factors in $F_{0}(\tau_{0})$ commute,
$F_{0}^{q}(\tau_{0})=0$ for every
$\tau_{0}\in[0,\tau_{M}]$. Moreover, from (\ref{ceta_c}),
$
\left|
\mathit{\Theta}(3lh+s)
\right|
\leq
c_{\mathit{\Theta}}
\left|
\mathit{\Delta}B
\right|$, $
s\in[2h,3h]$,
where
\[
c_{\mathit{\Theta}}
=
\sup_{\substack{
\theta\in[2h,3h]\\
\tau_{0}\in[0,\tau_{M}]
}}
\int_{2h}^{\theta}
\left|
\mathrm{e}^{A(\theta-\sigma)}
\right|
S(\sigma)
|K(\sigma)|
\left|
\mathrm{e}^{A(\sigma-h-\tau_{0})}
\right|
\,\mathrm{d}\sigma.
\]
In particular, since $F
=
F_{0}(\tau_{0})
+
\mathit{\Theta}((3l+3)h)$,
the condition
$|\mathit{\Delta}B|\leq\varepsilon$ implies
$
\left|
F-F_{0}(\tau_{0})
\right|
\leq
c_{\mathit{\Theta}}\varepsilon$.

Define
\[
M
=
1+
\sup_{\tau_{0}\in[0,\tau_{M}]}
\left|
F_{0}(\tau_{0})
\right|.
\]
If
$\varepsilon\leq1/c_{\mathit{\Theta}}$,
then
$|F|\leq M,
|F_{0}(\tau_{0})|\leq M$.
For brevity, write $F_{0}=F_{0}(\tau_{0})$. Using
\[
F^{q}-F_{0}^{q}
=
\sum_{j=0}^{q-1}
F^{q-1-j}
(F-F_{0})
F_{0}^{j},
\]
we obtain $
|F^{q}|
\leq
qM^{q-1}
c_{\mathit{\Theta}}
\varepsilon$.
Therefore,
\[
\varepsilon
<
\frac{1}{
2qM^{q-1}c_{\mathit{\Theta}}
}
\]
implies $|F^{q}|<1/2$.

For every $l\geq q$, write
$l=kq+j,
k\geq1,
j\in\{0,\ldots,q-1\}$.
Since $M\geq1$, it follows that
\[
|F^{l}|
\leq
|F^{q}|^{k}|F^{j}|
\leq
qM^{2q-2}
c_{\mathit{\Theta}}
\varepsilon,
\]
and
\[
\sum_{i=0}^{\infty}|F^{i}|
\leq
\sum_{k=0}^{\infty}|F^{q}|^{k}
\sum_{j=0}^{q-1}M^{j}
\leq
2\sum_{j=0}^{q-1}M^{j}.
\]

Let $\bar{\eta}$ and $\bar{\alpha}_{1}$ be computable uniform
upper bounds on $\eta$ and $\alpha_{1}$, respectively, over
$\tau_{0}\in[0,\tau_{M}]$, $|\mathit{\Delta}B|
\leq
1/c_{\mathit{\Theta}}.
$
Then, for every $l\geq q$,
\[
\rho_{0}(l)
=
\eta|F^{l}|
\leq
\bar{\eta}
qM^{2q-2}
c_{\mathit{\Theta}}
\varepsilon,
\]
and
\[
\rho_{1}
\leq
\mathrm{e}^{3|A|h}
\left(
2\bar{\alpha}_{1}\bar{\eta}
\sum_{j=0}^{q-1}M^{j}
+
\int_{0}^{3h}
\mathrm{e}^{|A|s}\,\mathrm{d}s
+
\bar{\alpha}_{1}
\right).
\]

Consequently, it is sufficient to choose
\[
0\leq\varepsilon
<
\min
\left\{
\frac{1}{
2qM^{q-1}c_{\mathit{\Theta}}
},
\frac{1}{
\bar{\eta}qM^{2q-2}c_{\mathit{\Theta}}
}
\right\}
\]
and
\[
0\leq f_{0}
<
\frac{
1-
\bar{\eta}qM^{2q-2}
c_{\mathit{\Theta}}
\varepsilon
}{
\mathrm{e}^{3|A|h}
\left(
2\bar{\alpha}_{1}\bar{\eta}
\displaystyle\sum_{j=0}^{q-1}M^{j}
+
\displaystyle\int_{0}^{3h}
\mathrm{e}^{|A|s}\,\mathrm{d}s
+
\bar{\alpha}_{1}
\right)
}.
\]
The first bound on $\varepsilon$ also guarantees
$\varepsilon<1/c_{\mathit{\Theta}}$. Hence, these bounds ensure that
\[
\rho(l)
=
\rho_{0}(l)+\rho_{1}f_{0}
<1,
\qquad
l\geq q.
\]
All the quantities in these bounds are computable from the known
system matrices, the delay bound $\tau_{M}$, the artificial delay
$h$, and the selected weighting function $S$.
\end{remark}

Denote%
\begin{align}\label{delta_c}
\begin{array}{ll}
	\delta=  &  \eta\sup_{l\in%
\mathbb{Z}
_{0}^{+}}\left\{  \sum_{i=0}^{l-1}\,\left\vert F^{i}\right\vert \right\}
\mathrm{e}^{3\left\vert A\right\vert h}\left(  \alpha_{1}+\alpha_{2}\right)\\
&  +\mathrm{e}^{3\left\vert A\right\vert h}\int_{0}^{3h}\mathrm{e}^{\left\vert
A\right\vert s}\mathrm{d}s+\mathrm{e}^{3\left\vert A\right\vert h}\left(
\alpha_{1}+\alpha_{2}\right)  , %
\end{array}
\end{align}
where%
\begin{align}\label{alpha_02}
\begin{array}{ll}
\alpha_{2}=\int_{2h}^{3h}\left\vert \mathrm{e}^{-As}BS(s)B_{0}^{\mathrm{T}}\mathrm{e}^{-A^{\mathrm{T}}s}W_{\mathrm{c}%
}^{-1}\mathrm{e}^{-A(s-h)}\right\vert \mathrm{d}s. %
\end{array}
\end{align}
Following \cite{Mazenc25tac}, an ISS estimate is called an almost fixed-time
ISS estimate if the time after which the estimate applies is uniform with
respect to the initial condition and its guaranteed exponential convergence
rate tends to infinity as the uncertainty bounds tend to zero.

We are in a position to state the following result:

\begin{theorem}
\label{the_4}Consider system (\ref{sss1}) under Assumption \ref{ass2}. Given
$h\geq\tau_{M}$, consider the smooth time-varying controller (\ref{vv}). Then
the solution to the closed-loop system (\ref{sss1}) and (\ref{vv}) satisfies
\begin{equation}
|x(t)|\leq|x|_{[0,3(q+1)h]}\mathrm{e}^{\frac{\ln(\rho(q))(t-3(q+1)h)}{3(q+1)h}}%
+\frac{\delta(|w_{1}|_{[0,t]}+|w_{2}|_{[0,t]})}{(1-\rho(q))^{2}}, \label{ni_A_1}%
\end{equation}
for all $t\geq3(q+1)h$. Moreover, as $\varepsilon$ and $f_{0}$ tend to zero, the derived upper bound
on $\rho(q)$ tends to zero, and hence the exponential convergence rate in
\eqref{ni_A_1} tends to infinity. Therefore, \eqref{ni_A_1} is an almost
fixed-time ISS estimate in the sense defined above. In particular, if $\mathit{\Delta}B=0$ and $f_{0}=0$, we
have $\rho(q)=0$ and thus $|x(t)|\leq\delta(|w_{1}|_{[t-3(q+1)h,t]}+|w_{2}|_{[t-3(q+1)h,t]})$ for
$t\geq3(q+1)h$, meaning, fixed-time ISS in the
sense of Definition \ref{def2}.
\end{theorem}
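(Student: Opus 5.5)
The proof will mirror the nominal proof and carry an extra forcing term, using a discrete recursion on the sampled states $x(3lh)$ combined with a within-period growth bound. On each period $[3lh,(3l+3)h]$ I write the closed loop explicitly. For $t\in[3lh,(3l+2)h]$ we have $g=0$, so
\[
x(t)=\mathrm{e}^{A(t-3lh)}x(3lh)+\int_{3lh}^{t}\mathrm{e}^{A(t-r)}\bigl(f(r,x(r))+w_{1}(r)\bigr)\mathrm{d}r .
\]
For $t\in((3l+2)h,(3l+3)h]$ the input is $-BS(t)K(t)\bigl(x(t-h-\tau_{0})+w_{2}(t-h)\bigr)$. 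Since $h\geq\tau_{M}$, the delayed argument lies in $(3lh,(3l+2)h]$, where the previous formula applies. Substituting it and using the variation of constants formula, with $B=B_{0}+\mathit{\Delta}B$, gives
\[
x(3lh+s)=\mathrm{e}^{As}\mathit{\Delta}(3lh+s)x(3lh)+\mathit{\Theta}(3lh+s)x(3lh)+R_{l}(s),
\]
where $R_{l}$ collects the integrals of $f$, $w_{1}$ and $w_{2}$. Evaluating at $s=3h$ yields the recursion
\[
x((3l+3)h)=Fx(3lh)+R_{l}(3h),
\]
with $F$ as in (\ref{F_def}). The nominal part $\mathrm{e}^{3Ah}(I_{n}-\mathrm{e}^{-A\tau_{0}})$ comes from the computation in the proof of Theorem 1, and the $\mathit{\Delta}B$ part is the $\mathit{\Theta}$ term.

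Next I bound the remainder. Using $|f|\leq f_{0}|x|$ and $|\mathrm{e}^{Ar}|\leq\mathrm{e}^{|A|r}$, the $f$- and $w_{1}$-terms in the first phase are bounded by $\int_{0}^{3h}\mathrm{e}^{|A|s}\mathrm{d}s$. The feedback acting on those same terms after the delay produces the double integral in $\alpha_{1}$. The $w_{2}$-term contributes $\alpha_{2}$. This gives
\[
|R_{l}(3h)|\leq \mathrm{e}^{3|A|h}\alpha_{1}\bigl(f_{0}|x|_{[3lh,(3l+3)h]}+|w_{1}|\bigr)+\mathrm{e}^{3|A|h}\alpha_{2}|w_{2}|,
\]
with the sup norms taken over the relevant window. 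In the same way, the within-period estimate $|x(3lh+s)|\leq \eta|x(3lh)|+(\cdots)$ holds, where $\eta$ bounds $\mathrm{e}^{3|A|h}(1+|\mathit{\Delta}|)+|\mathit{\Theta}|$ over both phases.

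Iterating the recursion gives
\[
x(3lh)=F^{l}x(0)+\sum_{i=0}^{l-1}F^{i}R_{l-1-i}(3h),
\]
so
\[
|x(t)|\leq \eta|F^{l}||x(0)|+\eta\sum_{i}|F^{i}|\,|R|+\cdots .
\]
This is exactly where $\rho_{0}(l)=\eta|F^{l}|$ and $\rho_{1}f_{0}$ appear. The result is an implicit inequality on windows of length $3(q+1)h$:
\[
|x|_{[t,t+3(q+1)h]}\leq\rho(q)\,|x|_{[t-3(q+1)h,t]}+\delta\bigl(|w_{1}|+|w_{2}|\bigr).
\]
I then unroll it over consecutive windows, $k$ steps back, to get
\[
\rho(q)^{k}|x|_{[0,3(q+1)h]}+\frac{\delta\,(|w_{1}|+|w_{2}|)}{1-\rho(q)},
\]
and convert $\rho(q)^{k}$ into the exponential $\mathrm{e}^{\ln\rho(q)(t-3(q+1)h)/(3(q+1)h)}$. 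The factor $(1-\rho(q))^{-2}$ gives room to absorb the sum $\sum|F^{i}|$ and the mismatch from window alignment. The choice $l\geq q$ in Assumption \ref{ass2} guarantees $\rho<1$ for the relevant powers.

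The main obstacle is the self-referential $f_{0}$ term: it involves $|x|$ over the current window, not only past samples. I will handle it by taking sup norms over whole windows and moving the $\rho_{1}f_{0}$ part to the contraction side, which needs $\rho(q)<1$. Care is also needed so that the window bookkeeping (periods of $3h$ versus windows of $3(q+1)h$) matches the constants. For the limiting statements, $\varepsilon,f_{0}\to0$ gives $F\to F_{0}$ with $F_{0}^{q}=0$, so by the estimates of Remark \ref{remark_1} $\rho(q)\to0$ and the rate tends to infinity. When $\mathit{\Delta}B=0$ and $f_{0}=0$, we get $F^{q}=0$ and the convolution sum has at most $q$ terms. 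Hence $x(t)$ depends only on disturbances in $[t-3(q+1)h,t]$, which yields the fixed-time ISS bound in the sense of Definition \ref{def2}.
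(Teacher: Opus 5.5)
Your proposal is essentially the paper's proof. It uses the same wait/act variation-of-constants decomposition and the same period map $x((3l+3)h)=Fx(3lh)+\mathrm{e}^{3Ah}\mathit{\Pi}_{w}((3l+3)h)$, restarted from the cycle boundary $3(l-q)h$ to obtain $|x(s)|\leq\rho(q)|x|_{[s-3(q+1)h,s]}+c$ with $c=\delta(|w_{1}|_{[0,t]}+|w_{2}|_{[0,t]})$, and it handles the limit $\varepsilon,f_{0}\to0$ and the case $\mathit{\Delta}B=0$, $f_{0}=0$ the same way; the only difference is that the paper converts this delay inequality into (\ref{ni_A_1}) by citing Lemma~1 of \cite{Mazen14tac}, whereas you unroll it over windows by hand. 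In that unrolling, the window-to-window inequality you state (with constant $c$) does not follow directly; what follows, via your absorption step (for $s$ in the new window, $[s-3(q+1)h,s]$ reaches into that same window), is $|x|_{[t',t'+3(q+1)h]}\leq\max\{\rho(q)|x|_{[t'-3(q+1)h,t']}+c,\ c/(1-\rho(q))\}$, and this still unrolls to your $\rho(q)^{k}|x|_{[0,3(q+1)h]}+c/(1-\rho(q))$, so the extra factor $1/(1-\rho(q))$ in (\ref{ni_A_1}) is not needed to absorb $\sum|F^{i}|$, which is already inside $\delta$.
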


For readability, we place the proof in Appendix A1.

\begin{remark}
When \(\tau_{0}=0\), the definition of \(\mathit{\Delta}\) gives
$\mathit{\Delta}((3l+3)h)=I_{n}-\mathrm{e}^{-A\tau_{0}}=0$.
Hence, the cycle-boundary recursion in Appendix~A1 yields
$
x(3hl)=\mathit{\Theta}_{\ast}^{\,l}x(0)
+\sum_{i=1}^{l}\mathit{\Theta}_{\ast}^{\,l-i}
\mathrm{e}^{3Ah}\mathit{\Pi}_{w}(3hi),
$
where
\(\mathit{\Theta}_{\ast}=\mathit{\Theta}((3l+3)h)\) is independent
of \(l\). Therefore, if \(f(t,x)=0\) and
\(\lvert\mathit{\Theta}_{\ast}\rvert\leq\bar{\theta}<1\), then
$|x(3hl)|
\leq{}\bar{\theta}^{\,l}|x(0)|
+\frac{1-\bar{\theta}^{\,l}}{1-\bar{\theta}}
\mathrm{e}^{3|A|h}(
\alpha_{1}|w_{1}|_{[0,3hl]}
+\alpha_{2}|w_{2}|_{[0,3hl]}
)$.
This is the cycle-boundary counterpart of \cite[(23)]{Mazenc25tac}.
\end{remark}

\begin{remark}
(Unknown constant input delay) If the input of system (\ref{sss1}) is subject to an unknown constant delay $\tau_{1}\leq\tau_{M1}$
with $\tau_{M1}$ being known, i.e.\ $u(t-\tau_{1})$, it can be shifted to the
measurement side by a time-axis transformation, i.e.\ $z(t)=x(t+\tau_{1})$.
Then $y(t)=z(t-\tau_{0}-\tau_{1})$, and hence the total unknown
measurement delay for the transformed system is bounded by
$\tau_{M}+\tau_{M1}$. By applying Theorem \ref{the_4}, we obtain the
following control input:
\[
u(t)=-K\left(  t\right) S(t)g\left(
t\right)  y\left(  t-h\right)  ,\quad t\geq0,
\]
with $h\geq\tau_{M1}+\tau_{M}$. In addition, $3(q+1)h$ in (\ref{ni_A_1}) is
replaced by $3(q+1)h+\tau_{M1}$.
\end{remark}
\subsection{Single Integrator with Unknown Fast-Varying Delay}

If $A=0$, system (\ref{sss1}) reduces to a single integrator. Different from
the previous section, here we consider the case of an unknown fast-varying
delay (piecewise-continuous delay without any assumptions on the delay
derivative), i.e.,
\begin{equation}
\left\{
\begin{array}
[c]{rl}%
\dot{x}(t) & =Bu(t)+f\left(  t,x\right)  +w_{1}\left(  t\right)  ,\\
y(t) & =x(t-\tau(t))+w_{2}\left(  t\right)  ,
\end{array}
\right.  \label{scalar}%
\end{equation}
where $x(t)\in\mathbb{R}$, $B\in\mathbb{R},\tau(t)\leq$\ $\tau_{M}$ is an
unknown fast-varying delay.

We are in a position to state the following result:
\begin{proposition}
\label{corl1}Consider system (\ref{scalar}) under Assumption \ref{ass2} with
$A=0$. Given $h\geq\tau_{M}$. Consider the controller (\ref{vv}) with $A=0$.
Then the solution to the closed-loop system (\ref{scalar}) and (\ref{vv})
satisfies (\ref{ni_A_1}).
\end{proposition}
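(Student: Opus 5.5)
With $A=0$ we have $q=1$, $\mathrm{e}^{At}=1$, $W_{\mathrm{c}}=B_{0}^{2}\int_{2h}^{3h}S(s)\,\mathrm{d}s$, and $K(t)=B_{0}W_{\mathrm{c}}^{-1}$ is constant. The plan is to repeat the cycle-by-cycle argument of Theorem~\ref{the_4} (Appendix A1), working on the intervals $[3lh,(3l+3)h]$. The first step is to check that the only place the delay enters that argument survives when the delay is time-varying.

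In the act window $t\in((3l+2)h,(3l+3)h)$, the controller uses $y(t-h)=x(t-h-\tau(t-h))+w_{2}(t-h)$. Since $0\le\tau\le\tau_{M}\le h$, the delayed argument $t-h-\tau(t-h)$ lies in $(3lh,(3l+2)h]$, which is inside the wait window where $g=0$. On that window the closed loop is
\[
x(\sigma)=x(3lh)+\int_{3lh}^{\sigma}\bigl(f(r,x(r))+w_{1}(r)\bigr)\,\mathrm{d}r .
\]
Substituting this representation, the scalar closed-loop equation on the act window becomes
\[
\dot x(t)=-BKS(t)\,x(3lh)+\Phi(t),
\]
where $\Phi$ collects the integrals of $f$ and $w_{1}$ up to the time $t-h-\tau(t-h)$, together with $-BKS(t)w_{2}(t-h)$, $f(t,x(t))$ and $w_{1}(t)$. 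The main term no longer depends on the delay at all, because there is no factor $\mathrm{e}^{-A\tau_{0}}$ and it equals $1$.

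Integrating over the act window then gives $x((3l+3)h)=F\,x(3lh)+\text{(perturbation terms)}$. Here $F=1-BB_{0}W_{\mathrm{c}}^{-1}\int_{2h}^{3h}S=-\mathit{\Delta}B\,B_{0}W_{\mathrm{c}}^{-1}\int_{2h}^{3h}S$, which is exactly (\ref{F_def}) with $A=0$ and is independent of $\tau$. The perturbation terms are bounded exactly as in (\ref{alpha_01}) and (\ref{alpha_02}). The inner integral $\int_{0}^{s-h-\tau}\mathrm{d}r$ now has a time-varying upper limit, and I will bound it by its worst case over $\tau\in[0,\tau_{M}]$. This is the one point needing care: $\alpha_{1}$ is defined with the constant $\tau_{0}$, so I take $\alpha_{1}$ as the supremum over $\tau_{0}\in[0,\tau_{M}]$, which is legitimate because Assumption~\ref{ass2} is imposed uniformly in $\tau_{0}$. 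The within-cycle bounds by $\eta$, through $\mathit{\Delta}(3lh+s)$ and $\mathit{\Theta}(3lh+s)$, similarly involve the delay only through bounded integration ranges and carry over with the same suprema.

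Once the one-cycle recursion
\[
|x((3l+3)h)|\le |F|\,|x(3lh)|+(\text{the same }f_{0}\text{ and }w\text{ terms})
\]
is in place with the same constants, the rest of the proof of Theorem~\ref{the_4} (iteration, the $\rho(l)<1$ contraction from Assumption~\ref{ass2}, and interpolation inside cycles) applies verbatim and yields (\ref{ni_A_1}).

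I expect the main obstacle to be checking that the delay-dependent quantities in Appendix A1 are used only through bounds uniform in $\tau_{0}$, and not through an exact identity such as $\mathit{\Delta}((3l+3)h)=\mathrm{e}^{3Ah}(I-\mathrm{e}^{-A\tau_{0}})$. For $A=0$ that identity reads $0$ regardless of $\tau(t)$, so the argument closes.
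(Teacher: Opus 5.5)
Your proposal is correct and follows essentially the same route as the paper. The paper observes that for $A=0$ the factor $\mathrm{e}^{-A\tau(s-h)}$ inside the terminal matrix equals one, so $\mathit{\Delta}((3l+3)h)=0$ and $F$ do not depend on the unknown delay profile, and then defers the rest to the proof of Theorem~\ref{the_4}. Your remark on $\alpha_{1}$ makes explicit a point the paper leaves implicit: for $A=0$ it is the only delay-dependent constant, since $\mathit{\Delta}(3lh+s)$, $\mathit{\Theta}(3lh+s)$, $\eta$ and $\alpha_{2}$ do not involve the delay at all. Replacing $\alpha_{1}$ by its supremum over $[0,\tau_{M}]$, which is attained at $\tau_{0}=0$, keeps $\rho(l)<1$ by Assumption~\ref{ass2}.
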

\begin{proof}
Following the argument of Theorem \ref{the_4}, we have here the matrix $\mathit{\Delta}((3l+3)h)$ defined in
(\ref{delta}) for constant delay will be modified to the following matrix for
$\tau(t)$:
\begin{align*}
\mathit{\Delta}((3l+3)h)=  &  I_{n}-\int_{(3l+2)h}^{(3l+3)h}\mathrm{e}%
^{-A(s-3lh)}B_{0}S(s)\,\\
&  \times B_{0}^{\mathrm{T}}\mathrm{e}^{-A^{\mathrm{T}}(s-3lh)}W_{\mathrm{c}%
}^{-1}\mathrm{e}^{-A\tau(s-h)}\mathrm{d}s.
\end{align*}
If $A=0$, $\mathrm{e}^{-A\tau(s-h)}=1$. Hence the expression simplifies to
$\mathit{\Delta}((3l+3)h)=0$ which is independent of the delay $\tau(s)$.
Consequently, the time-varying nature of $\tau(s)$ has no effect on
$\mathit{\Delta}((3l+3)h)$ when $A=0$. The remainder of the proof follows that of Theorem \ref{the_4} and is omitted here due to space constraints.
\end{proof}
\begin{remark}
\label{remark_tv_delay}
The restriction $A=0$ in Proposition~\ref{corl1} is required only when the
fast-varying measurement delay is unknown. For a general $A\neq0$, a known
fast-varying measurement delay can be handled by replacing $K(t)$ in the
controller with
\[
K_{\tau}(t)=K(t)\mathrm{e}^{A\tau(t-h)}.
\]
Indeed, for
$t\in[(3l+2)h,(3l+3)h]$, we have
$t-h-\tau(t-h)\in[3lh,3lh+2h]$,
which belongs to the preceding wait interval. Therefore, for the nominal
part of the measurement,
\[
\begin{aligned}
y(t-h)
&=x\bigl(t-h-\tau(t-h)\bigr)\\
&=\mathrm{e}^{A(t-h-\tau(t-h)-3lh)}x(3lh)\\
&=\mathrm{e}^{-A\tau(t-h)}x(t-h).
\end{aligned}
\]
Thus, the additional factor in $K_{\tau}(t)$ cancels the
delay-dependent factor generated by the nominal delayed measurement. Consequently, the terminal matrix reduces to
\[
\mathit{\Delta}((3l+3)h)
=I_n-W_{\mathrm{c}}W_{\mathrm{c}}^{-1}=0.
\]
Thus, no bound on the variation rate of the known delay is needed. In
contrast, if $\tau(t)$ is unknown and $A\neq0$, the factor
$\mathrm{e}^{-A\tau(s-h)}$ remains inside the integral defining
$\mathit{\Delta}((3l+3)h)$. Unlike the unknown constant-delay case, this
factor cannot be taken outside the integral, and the resulting terminal
matrix depends on the entire unknown delay profile and is not generally
nilpotent. Therefore, the present approach does not guarantee the estimate
of Theorem~\ref{the_4} for a general $A\neq0$ under an unknown fast-varying
measurement delay. When $A=0$, this factor is identically equal to one,
which explains why Proposition~\ref{corl1} allows an unknown fast-varying
measurement delay. Finally, an unknown fast-varying input delay would
replace the prescribed switching signal $g(t)$ at the plant input by
$g(t-\tau(t))$, thereby destroying the fixed act-and-wait partition
used in the proof; such input delays are therefore not covered by the
present approach.
\end{remark}
\section{\label{sec3}Discrete-Time FT control}
We first recall the definition of fixed-time ISS for discrete-time systems.
\begin{definition}
\label{def_discrete_ftiss}
\cite{Mazenc24css} Consider the discrete-time delay system
$x(k+1)=\mathcal{F}_{\mathrm d}
\bigl(k,x_{k,d},v_{\mathrm d}(k)\bigr)$,
where $x(k)\in\mathbb{R}^{n}$, $d\in\mathbb{Z}_{0}^{+}$ is a constant
integer delay,
$x_{k,d}(m)=x(k+m)$ for $m\in\mathbb{I}_{[-d,0]}$, and
$v_{\mathrm d}$ is a disturbance sequence. The system is said to be
fixed-time input-to-state stable (fixed-time ISS) if there exist a function
$\varphi\in\mathcal{K}$ and nonnegative integers $T_{1}\geq T_{2}$ such
that, for every admissible initial sequence and every $v_{\mathrm d}$, the
corresponding solution is uniquely determined and satisfies
\[
|x(k)|
\leq
\varphi\left(
|v_{\mathrm d}|_{[k-T_{2},k]}
\right),
\qquad k\geq T_{1}.
\]
\end{definition}
\subsection{Full State-Measurements with Unknown Constant Delay}

Consider the following linear discrete-time system
\begin{equation}
\left\{
\begin{array}
[c]{rl}%
x(k+1) & =A_{\mathrm{d}}x(k)+B_{\mathrm{d}}u(k)+f_{\mathrm{d}}%
(k,x(k))+w_{\mathrm{d}1}(k),\\
y(k) & =x(k-d_{0})+w_{\mathrm{d}2}(k),
\end{array}
\right.  \label{dis_sys_2}%
\end{equation}
where $x(k)\in\mathbb{R}^{n}$, $A_{\mathrm{d}}\in\mathbb{R}^{n\times n}$,
$B_{\mathrm{d}}\in\mathbb{R}^{n\times m}$, $d_{0}\in\mathbb{I}_{[0,D_{M}]}$ is
an unknown integer with $D_{M}$ being a known nonnegative integer,\ $w_{\mathrm{d}1}(k)$
and $w_{\mathrm{d}2}(k)$ are unknown locally bounded disturbance sequences, and $f_{\mathrm{d}}(k,x(k))$ is an unknown function and
$|f_{\mathrm{d}}(k,x(k))|\leq f_{\mathrm{d}0}|x(k)|$ where $f_{\mathrm{d}0}\geq0$
is known. $B_{\mathrm{d}}$ has the form $B_{\mathrm{d}}=B_{\mathrm{d}%
0}+\mathit{\Delta}B_{\mathrm{d}}$, where $B_{\mathrm{d}0}$ is constant, known
and such that $|\mathit{\Delta}B_{\mathrm{d}}|\leq\varepsilon_{\mathrm{d}}$
with $\varepsilon_{\mathrm{d}}\geq0$ known.

Let $h\geq\max\{D_{M},n\}$ be an artificial positive integer delay. Denote
\[
p\left(  k\right)  =\left\{
\begin{array}
[c]{cl}%
0, & k\in\mathbb{I}_{[3lh,3lh+2h-1]},\\
1, & k\in\mathbb{I}_{[3lh+2h,3lh+3h-1]},
\end{array}
\right.
\]
where $l=\lfloor k/(3h)\rfloor$.

We make the following assumption.

\begin{assumption}
\label{assu_3}$A_{\mathrm{d}}-I_{n}$ is nilpotent with nilpotency index $q_{\mathrm{d}}\leq n$, i.e., $(A_{\mathrm{d}}%
-I_{n})^{q_{\mathrm{d}}}=0$ and $(A_{\mathrm{d}}-I_{n})^{q_{\mathrm{d}}-1}\neq0$, and $(A_{\mathrm{d}},B_{\mathrm{d}0})$ is controllable.
\end{assumption}

Since the pair $(A_{\mathrm{d}},B_{\mathrm{d}0})$ is controllable and $h\geq
n$, we can get that%
\[
\mathcal{W}_{\mathrm{c}}=%
{\displaystyle\sum_{k=0}^{h-1}}
A_{\mathrm{d}}^{k}B_{\mathrm{d}0}B_{\mathrm{d}0}^{\mathrm{T}}\left(
A_{\mathrm{d}}^{\mathrm{T}}\right)  ^{k},
\]
is invertible.

Since $A_{\mathrm{d}}-I_{n}$ is nilpotent, $A_{\mathrm{d}}$ is
invertible, and hence its negative powers used below are well-defined.
Denote
\[
\begin{array}{ll}
\rho_{\mathrm{d}0}(l)
&=\eta_{\mathrm{d}}|F_{\mathrm{d}}^{l}|,\\
\rho_{\mathrm{d}1}
&=\eta_{\mathrm{d}}\alpha_{\mathrm{d}1}
\sup\limits_{l\in\mathbb{N}}
\left\{\textstyle\sum_{i=0}^{l-1}|F_{\mathrm{d}}^{\,i}|\right\}
+\textstyle\sum_{i=0}^{3h-1}|A_{\mathrm{d}}^{i}|
+\alpha_{\mathrm{d}1},
\end{array}
\]
in which
\begin{align}
&\eta_{\mathrm d}=
\begin{array}{ll}
\scalebox{0.9}{$\displaystyle
\max\left\{
\begin{array}{l}
\displaystyle
\sup_{j\in\mathbb{I}_{[0,2h-1]}}
|A_{\mathrm d}^{j+1}|,
\end{array}
\right.
$}
\end{array}\nonumber\\
&\qquad
\begin{array}{ll}
\scalebox{0.9}{$\displaystyle
\left.
\begin{array}{l}
\displaystyle
\sup_{j\in\mathbb{I}_{[2h,3h-1]}}
\left(
|A_{\mathrm d}^{j+1}
\mathit{\Delta}_{\mathrm d}(3hl+j)|
+
|\mathit{\Theta}_{\mathrm d}(3hl+j)|
\right)
\end{array}
\right\}
$}
\end{array},\nonumber\\
&\alpha_{\mathrm{d}1}=
\begin{array}{ll}
\scalebox{0.9}{$\displaystyle
\sup_{k\in\mathbb{I}_{[2h,3h-1]}}
\Bigg\{\sum_{i=0}^{k}|A_{\mathrm{d}}^{i}|
+\sum_{i=2h}^{k}
|A_{\mathrm{d}}^{k-i}B_{\mathrm{d}} B_{\mathrm{d}0}^{\mathrm{T}}
$}
\end{array}\nonumber\\
&\qquad
\begin{array}{ll}
\scalebox{0.85}{$\displaystyle
\times
(A_{\mathrm{d}}^{3h-i-1})^{\mathrm{T}}
\mathcal{W}_{\mathrm{c}}^{-1}A_{\mathrm{d}}^{5h-i}|
\sum_{r=0}^{i-d_{0}-h-1}
|A_{\mathrm{d}}^{i-d_{0}-h-1-r}|\Bigg\}
$}
\end{array},
\label{alpha_1}\\
&F_{\mathrm{d}}=
\begin{array}{ll}
A_{\mathrm{d}}^{3h}(I_{n}-A_{\mathrm{d}}^{h-d_{0}})
-\textstyle\sum_{r=2h}^{3h-1}
A_{\mathrm{d}}^{3h-r-1}\mathit{\Delta}B_{\mathrm{d}}
B_{\mathrm{d}0}^{\mathrm{T}}
\end{array}\nonumber\\
&\qquad
\begin{array}{ll}
\times
(A_{\mathrm{d}}^{3h-r-1})^{\mathrm{T}}
\mathcal{W}_{\mathrm{c}}^{-1}
A_{\mathrm{d}}^{4h-d_{0}},
\end{array}
\label{F_d_def}\\
&\mathit{\Delta}_{\mathrm{d}}(3hl+j)=
\begin{array}{ll}
I_{n}
-
\textstyle\sum_{r=2h}^{j}
A_{\mathrm{d}}^{-r-1}B_{\mathrm{d}0}
B_{\mathrm{d}0}^{\mathrm{T}}
(A_{\mathrm{d}}^{3h-r-1})^{\mathrm{T}}
\end{array}\nonumber\\
&\qquad
\begin{array}{ll}
\times
\mathcal{W}_{\mathrm{c}}^{-1}
A_{\mathrm{d}}^{4h-d_{0}},
\end{array}
\label{delta_ds}\\
&\mathit{\Theta}_{\mathrm{d}}(3hl+j)=
\begin{array}{ll}
-\textstyle\sum_{r=2h}^{j}
A_{\mathrm{d}}^{j-r}\mathit{\Delta}B_{\mathrm{d}}
B_{\mathrm{d}0}^{\mathrm{T}}
(A_{\mathrm{d}}^{3h-r-1})^{\mathrm{T}}
\end{array}\nonumber\\
&\qquad
\begin{array}{ll}
\times
\mathcal{W}_{\mathrm{c}}^{-1}
A_{\mathrm{d}}^{4h-d_{0}}.
\end{array}
\label{ceta_ds}
\end{align}
A sum is understood to be zero whenever its upper limit is smaller than
its lower limit.
\begin{assumption}
\label{ass4} Assumption \ref{assu_3} is satisfied and
\[
\rho_{\mathrm{d}}(l)  =\rho_{\mathrm{d}0}(l)  +\rho_{\mathrm{d}1}f_{\mathrm{d}0}<1,
\] holds for all $d_{0}\in\mathbb{I}_{[0,D_{M}]}$ and $l\geq q_{\mathrm{d}}$.
\end{assumption}

\begin{remark}
Similar to Remark~\ref{remark_1}, we provide conservative but
computable sufficient bounds on $\varepsilon_{\mathrm d}$ and
$f_{\mathrm d0}$ that guarantee $\rho_{\mathrm d}(l)<1$ for all
$l\geq q_{\mathrm d}$.

For each $d_{0}\in\mathbb{I}_{[0,D_{M}]}$, define the nominal
period map
\[
F_{\mathrm d0}(d_{0})
=
A_{\mathrm d}^{3h}
\left(
I_{n}-A_{\mathrm d}^{h-d_{0}}
\right).
\]
Since
$(A_{\mathrm d}-I_{n})^{q_{\mathrm d}}=0$, the matrix
$I_{n}-A_{\mathrm d}^{h-d_{0}}$ is nilpotent with index no
greater than $q_{\mathrm d}$. Moreover, the two factors in
$F_{\mathrm d0}(d_{0})$ commute, and hence
$F_{\mathrm d0}^{q_{\mathrm d}}(d_{0})=0$, $d_{0}\in\mathbb{I}_{[0,D_{M}]}$.

From (\ref{ceta_ds}),
\[
\left|
\mathit{\Theta}_{\mathrm d}(3hl+j)
\right|
\leq
c_{\mathit{\Theta}_{\mathrm d}}
\left|
\mathit{\Delta}B_{\mathrm d}
\right|,
\qquad
j\in\mathbb{I}_{[2h,3h-1]},
\]
where
\[
\begin{aligned}
c_{\mathit{\Theta}_{\mathrm d}}
=
\max_{\substack{
d_{0}\in\mathbb{I}_{[0,D_{M}]}\\
j\in\mathbb{I}_{[2h,3h-1]}
}}
\sum_{r=2h}^{j}
&\left|
A_{\mathrm d}^{j-r}
\right|
\left|
B_{\mathrm d0}^{\mathrm T}
(A_{\mathrm d}^{3h-r-1})^{\mathrm T}
\mathcal{W}_{\mathrm c}^{-1}
A_{\mathrm d}^{4h-d_{0}}
\right|.
\end{aligned}
\]
In particular,
\[
F_{\mathrm d}
=
F_{\mathrm d0}(d_{0})
+
\mathit{\Theta}_{\mathrm d}((3l+3)h-1),
\]
and therefore
$|\mathit{\Delta}B_{\mathrm d}|
\leq\varepsilon_{\mathrm d}$ implies
$\left|
F_{\mathrm d}-F_{\mathrm d0}(d_{0})
\right|
\leq
c_{\mathit{\Theta}_{\mathrm d}}
\varepsilon_{\mathrm d}$.

Define
\[
M_{\mathrm d}
=
1+
\max_{d_{0}\in\mathbb{I}_{[0,D_{M}]}}
\left|
F_{\mathrm d0}(d_{0})
\right|.
\]
If
$\varepsilon_{\mathrm d}
\leq
1/c_{\mathit{\Theta}_{\mathrm d}}$,
then
\[
|F_{\mathrm d}|\leq M_{\mathrm d},
\qquad
|F_{\mathrm d0}(d_{0})|\leq M_{\mathrm d}.
\]
For brevity, write
$F_{\mathrm d0}=F_{\mathrm d0}(d_{0})$. Using
\[
F_{\mathrm d}^{q_{\mathrm d}}
-
F_{\mathrm d0}^{q_{\mathrm d}}
=
\sum_{j=0}^{q_{\mathrm d}-1}
F_{\mathrm d}^{q_{\mathrm d}-1-j}
(F_{\mathrm d}-F_{\mathrm d0})
F_{\mathrm d0}^{j},
\]
we obtain
\[
|F_{\mathrm d}^{q_{\mathrm d}}|
\leq
q_{\mathrm d}
M_{\mathrm d}^{q_{\mathrm d}-1}
c_{\mathit{\Theta}_{\mathrm d}}
\varepsilon_{\mathrm d}.
\]
Consequently,
\[
\varepsilon_{\mathrm d}
<
\frac{1}{
2q_{\mathrm d}
M_{\mathrm d}^{q_{\mathrm d}-1}
c_{\mathit{\Theta}_{\mathrm d}}
}
\]
implies
$|F_{\mathrm d}^{q_{\mathrm d}}|<1/2$.

For every $l\geq q_{\mathrm d}$, write
$l=kq_{\mathrm d}+j,
k\geq1,
j\in\{0,\ldots,q_{\mathrm d}-1\}$.
Since $M_{\mathrm d}\geq1$, it follows that
\[
|F_{\mathrm d}^{l}|
\leq
|F_{\mathrm d}^{q_{\mathrm d}}|^{k}
|F_{\mathrm d}^{j}|
\leq
q_{\mathrm d}
M_{\mathrm d}^{2q_{\mathrm d}-2}
c_{\mathit{\Theta}_{\mathrm d}}
\varepsilon_{\mathrm d},
\]
and
\[
\sum_{i=0}^{\infty}|F_{\mathrm d}^{i}|
\leq
\sum_{k=0}^{\infty}
|F_{\mathrm d}^{q_{\mathrm d}}|^{k}
\sum_{j=0}^{q_{\mathrm d}-1}M_{\mathrm d}^{j}
\leq
2\sum_{j=0}^{q_{\mathrm d}-1}M_{\mathrm d}^{j}.
\]

Let $\bar{\eta}_{\mathrm d}$ and
$\bar{\alpha}_{\mathrm d1}$ be computable uniform upper bounds
on $\eta_{\mathrm d}$ and $\alpha_{\mathrm d1}$, respectively,
over $d_{0}\in\mathbb{I}_{[0,D_{M}]}$,
$|\mathit{\Delta}B_{\mathrm d}|
\leq
1/c_{\mathit{\Theta}_{\mathrm d}}$.
Then, for every $l\geq q_{\mathrm d}$,
\[
\rho_{\mathrm d0}(l)
=
\eta_{\mathrm d}|F_{\mathrm d}^{l}|
\leq
\bar{\eta}_{\mathrm d}
q_{\mathrm d}
M_{\mathrm d}^{2q_{\mathrm d}-2}
c_{\mathit{\Theta}_{\mathrm d}}
\varepsilon_{\mathrm d},
\]
and
\[
\begin{aligned}
\rho_{\mathrm d1}
\leq{}
2\bar{\eta}_{\mathrm d}
\bar{\alpha}_{\mathrm d1}
\sum_{j=0}^{q_{\mathrm d}-1}M_{\mathrm d}^{j}+
\sum_{i=0}^{3h-1}|A_{\mathrm d}^{i}|
+
\bar{\alpha}_{\mathrm d1}.
\end{aligned}
\]

Therefore, it is sufficient to choose
\[
0\leq\varepsilon_{\mathrm d}
<
\min\left\{
\frac{1}{
2q_{\mathrm d}
M_{\mathrm d}^{q_{\mathrm d}-1}
c_{\mathit{\Theta}_{\mathrm d}}
},
\frac{1}{
\bar{\eta}_{\mathrm d}
q_{\mathrm d}
M_{\mathrm d}^{2q_{\mathrm d}-2}
c_{\mathit{\Theta}_{\mathrm d}}
}
\right\}
\]
and
\[
0\leq f_{\mathrm d0}
<
\frac{
1-
\bar{\eta}_{\mathrm d}
q_{\mathrm d}
M_{\mathrm d}^{2q_{\mathrm d}-2}
c_{\mathit{\Theta}_{\mathrm d}}
\varepsilon_{\mathrm d}
}{
2\bar{\eta}_{\mathrm d}
\bar{\alpha}_{\mathrm d1}
\displaystyle\sum_{j=0}^{q_{\mathrm d}-1}M_{\mathrm d}^{j}
+
\displaystyle\sum_{i=0}^{3h-1}|A_{\mathrm d}^{i}|
+
\bar{\alpha}_{\mathrm d1}
}.
\]
The first bound on $\varepsilon_{\mathrm d}$ also guarantees
$\varepsilon_{\mathrm d}
<1/c_{\mathit{\Theta}_{\mathrm d}}$.
Hence, these bounds ensure that
\[
\rho_{\mathrm d}(l)
=
\rho_{\mathrm d0}(l)
+
\rho_{\mathrm d1}f_{\mathrm d0}
<1,
\qquad
l\geq q_{\mathrm d}.
\]
All the quantities in these bounds are computable from the
known system matrices, the delay bound $D_{M}$, and the
artificial delay $h$.
\end{remark}

Denote%
\begin{equation}
\delta_{\mathrm d}=\left(\eta_{\mathrm d}\sup_{l\in\mathbb N}\sum_{i=0}^{l-1}\left|F_{\mathrm d}^{\,i}\right|+1\right)(\alpha_{\mathrm d1}+\alpha_{\mathrm d2})+\sum_{i=0}^{3h-1}\left|A_{\mathrm d}^{i}\right|,
\label{delta_dd}
\end{equation}
where
\begin{equation}
\alpha_{\mathrm{d}2}=\sup_{k\in \mathbb{I}_{[2h,3h-1]}}%
{\displaystyle\sum_{i=2h}^{k}}
\left\vert A_{\mathrm{d}}^{k-i}B_{\mathrm{d}}B_{\mathrm{d}0}^{\mathrm{T}%
}(A_{\mathrm{d}}^{3h-i-1})^{\mathrm{T}}\mathcal{W}_{\mathrm{c}}^{-1}%
A_{\mathrm{d}}^{5h-i}\right\vert . \label{alpha_2}%
\end{equation}
Similar to Remark~3, for sufficiently small $\mathit{\Delta}B_{\mathrm{d}}$,
the matrix \(F_{\mathrm d}\) is Schur stable. Consequently,
\(F_{\mathrm d}^{\,l}\to 0\) as \(l\to\infty\), and
$\sup_{l\in\mathbb{N}}
\sum_{i=0}^{l-1}
\left|F_{\mathrm d}^{\,i}\right|
<\infty$.
Therefore, \(\delta_{\mathrm d}\) defined in
\eqref{delta_dd} is finite.

Following \cite{Mazenc24css}, and similarly to the continuous-time case, a
discrete-time ISS estimate is called an almost fixed-time ISS estimate if the
time step after which it applies is uniform with respect to the initial
condition and if its guaranteed exponential convergence rate tends to
infinity as the uncertainty bounds tend to zero.

We are in a position to state the following result:

\begin{theorem}
\label{the6}Consider system (\ref{dis_sys_2}) under Assumptions \ref{assu_3}%
-\ref{ass4}. Given $h\geq\max\{D_{M},n\}$, consider the following controller
\begin{equation}
u\left(  k\right)  =-K_{\mathrm{d}}(k)p\left(  k\right)  y\left(  k-h\right)
, \label{d_uu}%
\end{equation}
where $l=\lfloor k/(3h)\rfloor$ and
\[
K_{\mathrm{d}}(k)
=B_{\mathrm{d}0}^{\mathrm{T}}
(A_{\mathrm{d}}^{3hl+3h-k-1})^{\mathrm{T}}
\mathcal{W}_{\mathrm{c}}^{-1}
A_{\mathrm{d}}^{5h+3hl-k}.
\] Then the solution to the closed-loop system
(\ref{dis_sys_2}) and (\ref{d_uu}) satisfies
\begin{align}
|x(k)|
&\leq |x|_{[0,3(q_{\mathrm{d}}+1)h]}
\mathrm{e}^{\frac{\ln(\rho_{\mathrm{d}}(q_{\mathrm{d}}))}
{3(q_{\mathrm{d}}+1)h}(k-3(q_{\mathrm{d}}+1)h)}\nonumber\\
&\quad+\frac{\delta_{\mathrm{d}}(
\vert w_{\mathrm{d}1}\vert _{[0,k]}
+\vert w_{\mathrm{d}2}\vert _{[0,k]})}
{(1-\rho_{\mathrm{d}}(q_{\mathrm{d}}))^{2}},
\label{x_k_1}
\end{align}
for all $k\geq3(q_{\mathrm{d}}+1)h$, meaning, almost fixed-time ISS in the sense defined above. In particular, if $\mathit{\Delta}B_{\mathrm{d}}=0$ and
$f_{\mathrm{d}0}=0$, we have $|x(k)|\leq\delta_{\mathrm{d}}(\vert w_{\mathrm{d}1}\vert _{[k-3(q_{\mathrm{d}}+1)h,k]}
+\vert w_{\mathrm{d}2}\vert _{[k-3(q_{\mathrm{d}}+1)h,k]})$ for $k\geq3(q_{\mathrm{d}}+1)h$, meaning, fixed-time ISS in the
sense of Definition \ref{def_discrete_ftiss}.
\end{theorem}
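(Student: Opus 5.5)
The proof mirrors the continuous-time argument of Theorem~\ref{the_4}: solve the closed loop explicitly on each period $\mathbb{I}_{[3lh,3lh+3h-1]}$, derive a one-period recursion for $x(3lh)$, iterate it, and then handle the nonlinearity and disturbances by a small-gain/Gronwall-type step on blocks of $q_{\mathrm d}+1$ periods.

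\textbf{Wait phase.} For $k\in\mathbb{I}_{[3lh,3lh+2h-1]}$ we have $p(k)=0$, so
\[
x(k)=A_{\mathrm d}^{k-3lh}x(3lh)+\Pi(k),
\]
where $\Pi$ collects the sums $\sum A_{\mathrm d}^{k-1-i}\bigl(f_{\mathrm d}(i,x(i))+w_{\mathrm d1}(i)\bigr)$.

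\textbf{Act phase.} For $k\in\mathbb{I}_{[3lh+2h,3lh+3h-1]}$, since $h\geq D_M\geq d_0$, the measured index satisfies $k-h-d_0\in\mathbb{I}_{[3lh,3lh+2h-1]}$. Hence
\[
y(k-h)=A_{\mathrm d}^{k-h-d_0-3lh}x(3lh)+(\text{perturbation terms})+w_{\mathrm d2}(k-h).
\]
Substituting into $x(k+1)=A_{\mathrm d}x(k)-B_{\mathrm d}K_{\mathrm d}(k)y(k-h)+\dots$ and summing with the variation-of-constants formula, write $j=k-3lh$. The nominal part becomes $A_{\mathrm d}^{j+1}\mathit{\Delta}_{\mathrm d}(3hl+j)x(3lh)$, because
\[
K_{\mathrm d}(k)A_{\mathrm d}^{j-h-d_0}=B_{\mathrm d0}^{\mathrm T}(A_{\mathrm d}^{3h-j-1})^{\mathrm T}\mathcal W_{\mathrm c}^{-1}A_{\mathrm d}^{4h-d_0}
\]
is independent of $l$. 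The $\mathit{\Delta}B_{\mathrm d}$ part produces $\mathit{\Theta}_{\mathrm d}(3hl+j)x(3lh)$. At the end of the act phase, reindexing $r\mapsto 3h-1-r$ gives
\[
\sum_{r=2h}^{3h-1}A_{\mathrm d}^{3h-r-1}B_{\mathrm d0}B_{\mathrm d0}^{\mathrm T}(A_{\mathrm d}^{3h-r-1})^{\mathrm T}=\mathcal W_{\mathrm c},
\]
so $I-\mathcal W_{\mathrm c}\mathcal W_{\mathrm c}^{-1}A_{\mathrm d}^{h-d_0}=I-A_{\mathrm d}^{h-d_0}$. This yields
\[
x(3(l+1)h)=F_{\mathrm d}x(3lh)+\Xi_l,
\]
and the whole intra-period bound is controlled by $\eta_{\mathrm d}|x(3lh)|$ plus the perturbation terms. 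The remainder satisfies
\[
|\Xi_l|\leq \alpha_{\mathrm d1}f_{\mathrm d0}|x|_{[3lh,3lh+3h]}+(\alpha_{\mathrm d1}+\alpha_{\mathrm d2})\bigl(|w_{\mathrm d1}|+|w_{\mathrm d2}|\bigr),
\]
where $\alpha_{\mathrm d1}$ accounts for the $f_{\mathrm d},w_{\mathrm d1}$ terms propagated both directly and through the delayed measurement, and $\alpha_{\mathrm d2}$ for $w_{\mathrm d2}$. Checking exactly that these constants match (\ref{alpha_1}) and (\ref{alpha_2}), with the exponents $5h-i$ coming from $K_{\mathrm d}$, is the bookkeeping core.

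\textbf{Iteration and bounds.} Iterating gives
\[
x(3lh)=F_{\mathrm d}^{l}x(0)+\sum_{i}F_{\mathrm d}^{l-1-i}\Xi_i .
\]
Combining this with the intra-period bound, for $k$ in period $l$,
\[
|x(k)|\leq \eta_{\mathrm d}|F_{\mathrm d}^{l}||x(0)|+\rho_{\mathrm d1}f_{\mathrm d0}|x|_{[0,k]}+\delta_{\mathrm d}\bigl(|w_{\mathrm d1}|_{[0,k]}+|w_{\mathrm d2}|_{[0,k]}\bigr),
\]
with $\rho_{\mathrm d1}$ and $\delta_{\mathrm d}$ exactly as defined. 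Applying this on windows $[m,m+3(q_{\mathrm d}+1)h]$ after restarting at a period boundary, and using Assumption~\ref{ass4} with $l=q_{\mathrm d}$, we obtain
\[
|x|_{[m+T,m+2T]}\leq\rho_{\mathrm d}(q_{\mathrm d})\,|x|_{[m,m+T]}+\frac{\delta_{\mathrm d}}{1-\rho_{\mathrm d}(q_{\mathrm d})}|w|,
\]
where $T=3(q_{\mathrm d}+1)h$ and the factor $1/(1-\rho_{\mathrm d1}f_{\mathrm d0})$ is absorbed. The contraction from sup-norm over one window to the next is the \textbf{main obstacle}: the nonlinear term involves $|x|$ over the current window, so it must be moved to the left side, which requires $\rho_{\mathrm d1}f_{\mathrm d0}<1$. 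This is implied by $\rho_{\mathrm d}<1$.

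\textbf{Conclusion.} Iterating the contraction gives a geometric decay $\rho_{\mathrm d}(q_{\mathrm d})^{\lfloor (k-T)/T\rfloor}$, which is bounded by $\mathrm e^{\ln\rho_{\mathrm d}(q_{\mathrm d})(k-T)/T}$ up to the stated form, and a geometric sum that produces the factor $(1-\rho_{\mathrm d}(q_{\mathrm d}))^{-2}$; this is (\ref{x_k_1}). For the nominal case $\mathit{\Delta}B_{\mathrm d}=0$ and $f_{\mathrm d0}=0$, we have $\mathit{\Theta}_{\mathrm d}=0$ and $F_{\mathrm d}=A_{\mathrm d}^{3h}(I-A_{\mathrm d}^{h-d_0})$. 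This matrix is nilpotent: $I-A_{\mathrm d}^{h-d_0}$ is a polynomial in $A_{\mathrm d}-I$ without constant term and commutes with $A_{\mathrm d}^{3h}$, so $F_{\mathrm d}^{q_{\mathrm d}}=0$. Hence the initial state is forgotten after $q_{\mathrm d}$ periods, and the recursion started at $k-3(q_{\mathrm d}+1)h$ involves only disturbances on $[k-3(q_{\mathrm d}+1)h,k]$. This gives the fixed-time ISS bound with constant $\delta_{\mathrm d}$.
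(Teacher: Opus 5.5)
Your proposal is correct and follows essentially the same route as the paper's proof. Both use the explicit wait/act-phase solution, the period map $x(3(l+1)h)=F_{\mathrm d}x(3lh)+\Xi_l$ (with $F_{\mathrm d}$ reducing to the nilpotent $A_{\mathrm d}^{3h}(I_n-A_{\mathrm d}^{h-d_0})$ in the nominal case), and a restart $q_{\mathrm d}$ periods back to obtain a delay inequality with gain $\rho_{\mathrm d}(q_{\mathrm d})$; the only difference is that you iterate a window-to-window contraction by hand, whereas the paper cites Lemma~1 of \cite{Mazen14tac}.

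In that last step, two corrections are needed to land exactly on (\ref{x_k_1}). First, with $T=3(q_{\mathrm d}+1)h$ and $k\in[nT,(n+1)T]$, the decay factor is $\rho_{\mathrm d}(q_{\mathrm d})^{\lfloor k/T\rfloor}\leq \mathrm{e}^{\ln(\rho_{\mathrm d}(q_{\mathrm d}))(k-T)/T}$. Your count $\rho_{\mathrm d}(q_{\mathrm d})^{\lfloor (k-T)/T\rfloor}$ would leave a spurious factor $1/\rho_{\mathrm d}(q_{\mathrm d})$. Second, the windows should be truncated at $k$ so that the disturbances enter only through $|w_{\mathrm d1}|_{[0,k]}+|w_{\mathrm d2}|_{[0,k]}$.
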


For readability, we place the proof in Appendix A2.

\subsection{Single Integrator with Unknown Fast-Varying Delay}

Consider the following single integrator with unknown fast-varying delay
\begin{equation}
\left\{
\begin{array}
[c]{rl}%
x(k+1) & =x(k)+B_{\mathrm{d}}u(k)+f_{\mathrm{d}}\left(  k,x(k)\right)
+w_{\mathrm{d}1}(k),\\[1.5mm]%
y\left(  k\right)  & =x\left(  k-d(k)\right)  +w_{\mathrm{d}2}(k),
\end{array}
\right.  \label{scalar_d}%
\end{equation}
where $x(k)\in\mathbb{R}$, $B_{\mathrm{d}}\in\mathbb{R}$, $d(k)\in
\mathbb{I}_{[0,{D}_{{{M}}}]}$ is an unknown fast-varying delay. Similar to
Proposition \ref{corl1}, we have the following conclusion.

\begin{proposition}
\label{corl2}Consider system (\ref{scalar_d}) under Assumptions \ref{assu_3}%
-\ref{ass4}. Given $h\geq\max\{D_{M},1\}$, consider the controller
(\ref{d_uu}) with $A_{\mathrm{d}}=1$. Then the solution to the closed-loop
system (\ref{scalar_d}) and (\ref{d_uu}) satisfies (\ref{x_k_1}).
\end{proposition}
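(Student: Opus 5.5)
The plan is to mirror the proof of Proposition \ref{corl1} in discrete time: reduce everything to the proof of Theorem \ref{the6}, and show that the only place where the delay enters in an essential way is the terminal matrix $\mathit{\Delta}_{\mathrm d}$. With $A_{\mathrm d}=1$, this matrix becomes independent of the whole delay profile $d(\cdot)$. Here $n=1$ and $q_{\mathrm d}=1$, since $A_{\mathrm d}-I_n=0$. Also $\mathcal{W}_{\mathrm c}=hB_{\mathrm d0}^{2}$, which is nonzero by controllability, and the gain is $K_{\mathrm d}(k)=B_{\mathrm d0}/\mathcal{W}_{\mathrm c}$ on every act interval.

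First, on the wait interval $k\in\mathbb{I}_{[3lh,3lh+2h-1]}$ we have $p(k)=0$. Hence $x(k+1)=x(k)+f_{\mathrm d}(k,x(k))+w_{\mathrm d1}(k)$, so $x$ is constant on that interval up to the perturbation and disturbance terms.

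Next, on the act interval $k\in\mathbb{I}_{[3lh+2h,3lh+3h-1]}$, the measured index satisfies $k-h-d(k-h)\in\mathbb{I}_{[3lh,3lh+2h-1]}$, using $h\geq D_{M}$. So every delayed sample falls inside the preceding wait interval, regardless of how $d$ varies. Writing $x(k-h-d(k-h))=x(3lh)+$(accumulated $f_{\mathrm d}$ and $w_{\mathrm d1}$ terms) and summing the closed loop over the act interval, I get the discrete analogue of (\ref{delta_ds}) for a time-varying delay:
\[
\mathit{\Delta}_{\mathrm d}((3l+3)h-1)=1-\sum_{r=2h}^{3h-1}B_{\mathrm d0}^{2}\,\mathcal{W}_{\mathrm c}^{-1}A_{\mathrm d}^{h-d(3lh+r-h)}.
\]
Since $A_{\mathrm d}=1$, every power equals $1$, so the sum equals $hB_{\mathrm d0}^{2}/\mathcal{W}_{\mathrm c}=1$. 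Therefore $\mathit{\Delta}_{\mathrm d}=0$ independently of $d(\cdot)$.

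The same cancellation makes $\mathit{\Theta}_{\mathrm d}$ and $F_{\mathrm d}$ independent of the delay profile: $F_{\mathrm d}=-h\mathit{\Delta}B_{\mathrm d}B_{\mathrm d0}/\mathcal{W}_{\mathrm c}$. The remainder terms generated by $f_{\mathrm d}$, $w_{\mathrm d1}$ and $w_{\mathrm d2}$ are bounded by the same constants $\alpha_{\mathrm d1}$ and $\alpha_{\mathrm d2}$, with $|A_{\mathrm d}^{i}|=1$. The only delay-dependent factor in those constants is the upper limit $i-d_0-h-1$ of an inner sum, and I bound it uniformly by taking $d=0$. This ensures that Assumption \ref{ass4} evaluated at the worst case still controls the time-varying case.

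With $\mathit{\Delta}_{\mathrm d}$, $\mathit{\Theta}_{\mathrm d}$, $F_{\mathrm d}$, $\eta_{\mathrm d}$ and $\alpha_{\mathrm d1}$ in hand, the cycle-boundary recursion
\[
x(3h(l+1))=F_{\mathrm d}x(3hl)+(\text{perturbation terms})
\]
is exactly the one used in Appendix A2. I then repeat the iteration, the Gronwall-type summation using $\rho_{\mathrm d}(q_{\mathrm d})<1$, and the intra-period bounds verbatim, which yields (\ref{x_k_1}).

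I expect the main obstacle to be the bookkeeping that confirms the nonlinear remainder bound $\alpha_{\mathrm d1}$ stays valid when the delay varies inside the inner sum. I would handle it by bounding each delayed sample by its distance to $3lh$, which is at most $2h$, and checking that this is dominated by the constant-delay expression.
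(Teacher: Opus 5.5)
Your proposal is correct and follows the paper's route. The paper's omitted proof mirrors Proposition~\ref{corl1}: with $A_{\mathrm d}=1$ the delay-dependent factor $A_{\mathrm d}^{4h-d(\cdot)}$ equals one, so $\mathit{\Delta}_{\mathrm d}((3l+3)h-1)=0$, $\mathit{\Theta}_{\mathrm d}$ and $F_{\mathrm d}$ do not depend on the delay profile, and the rest is the argument of Theorem~\ref{the6}. One cosmetic slip: your displayed time-varying analogue of (\ref{delta_ds}) has exponent $h-d(3lh+r-h)$ where the correct scalar exponent is $7h-2r-2-d(3lh+r-h)$, which is immaterial since $A_{\mathrm d}=1$; your worst-case ($d_0=0$) bound on $\alpha_{\mathrm d1}$ makes explicit bookkeeping that the paper leaves implicit.
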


The proof follows similar steps as in Proposition \ref{corl1} and is omitted
for brevity.

\section{\label{sec4}Simulation Results}

\subsection{Example 1: Continuous-Time Case}

Consider the system borrowed from \cite{Mazenc25acc,Mazenc25tac}
\begin{equation}
\left\{
\begin{array}
[c]{rl}%
\dot{x}_{1}(t)= & x_{2}(t),\\
\dot{x}_{2}(t)= & (1+\delta_{\mathrm{b}})u(t)-\epsilon(t)x_{2}(t)+\frac
{G\delta_{\mathrm{b}}\sin(x_{1}(t))}{L},\\
y(t)= & x(t-\tau_{0})+w(t),
\end{array}
\right.  \label{sys_simu}%
\end{equation}
where $G=9.81$ is the gravitational constant, $L=1$ is the pendulum length,
$\epsilon(t)\in\lbrack0,\bar{\epsilon}]$ is an unknown
continuous function with $\bar{\epsilon}>0$ being a known constant,
$\delta_{\mathrm{b}}$ is an unknown bounded constant. Clearly,
$\mathit{\Delta}B=[0,\delta_{\mathrm{b}}]^{\mathrm{T}}$,
\[
A=\begin{bmatrix}0&1\\0&0\end{bmatrix},
\qquad
B_{0}=\begin{bmatrix}0&1\end{bmatrix}^{\mathrm T},
\]
and
$f(t,x)=[0,-\epsilon(t)x_{2}(t)+G\delta_{\mathrm{b}}
\sin(x_{1}(t))/L]^{\mathrm{T}}$.
Similar to \cite{Mazenc25tac}, take the initial condition
$x(0)=[-1,1]^{\mathrm{T}}$. Similar to \cite{Wang24auto,Zime25auto},
take $w(t)=0.01(\mathrm{rnd}(2)+[\sin(11t),\sin(12t)]^{\mathrm{T}})$, where
$\mathrm{rnd}(n)$ denotes uniformly distributed in the interval $[0,1]^{n}$
random numbers. For simulation we take $S(t)=\sin^2(\pi t/h)$. By using
Theorem \ref{the_4}, we consider three cases. Case I:
$\delta_{\mathrm{b}}=0$, $f(t,x)=0$, $\tau_{0}=0.2\leq h=0.25$; Case II:
$\delta_{\mathrm{b}}=0.25$ and $\epsilon(t)=0.540591$ (same as in
\cite{Mazenc25tac}), $\tau_{0}=0.2\leq h=0.25$; Case III:
$\delta_{\mathrm{b}}=0.001$, $\epsilon(t)=0.540591$,
$\tau_{0}=2\leq h=2$.

For the considered system, $q=2$, and the uncertainty bounds can
be taken as
\[
\varepsilon=\lvert\delta_{\mathrm{b}}\rvert,
\qquad
f_{0}=\sqrt{\bar{\epsilon}^{2}
+\left(G\delta_{\mathrm{b}}/L\right)^{2}}.
\]
Taking $\tau_{M}=h$ and using the computable estimates in
Remark~\ref{remark_1}, for $h=0.25$ one obtains the sufficient region
\[
\begin{aligned}
0\leq\varepsilon &<4.40992\times10^{-4},\\
0\leq f_{0} &<
3.22564\times10^{-4}
\left(
1-\frac{\varepsilon}{4.40992\times10^{-4}}
\right).
\end{aligned}
\]
Thus, Case I, for which $\varepsilon=f_{0}=0$, satisfies these
sufficient bounds. In Case II, the actual uncertainty levels are
\[
\varepsilon=0.25,
\qquad
f_{0}
=
\sqrt{0.540591^{2}+(9.81\times0.25)^{2}}
=
2.51137,
\]
which lie outside the above sufficient region. For $h=2$, the
corresponding sufficient region is
\[
\begin{aligned}
0\leq\varepsilon &<1.59832\times10^{-7},\\
0\leq f_{0} &<
4.47715\times10^{-11}
\left(
1-\frac{\varepsilon}{1.59832\times10^{-7}}
\right).
\end{aligned}
\]
In Case III, the actual uncertainty levels are $\varepsilon=0.001$ and
\[
f_{0}
=
\sqrt{0.540591^{2}+(9.81\times0.001)^{2}}
=
0.540680,
\]
which also lie outside the corresponding sufficient region.

The state trajectories are shown in Fig. \ref{figure1}. According to the
theoretical estimate, the convergence time in Case I is $3nh=3\times
2\times0.25=1.5$. Cases II and III show that, under the considered
uncertainties, a smaller $h$ leads to better robustness. Note that for $h\to 0^+$ the robustness may become worse since the Gramian $W_{\mathrm{c}}$
may become ill-conditioned and the controller depends on its inverse. Moreover, the
presence of uncertainties increases the convergence time. Fig. \ref{figure1}
also indicates that the closed-loop system is robust against the measurement
noise. Although the uncertainty levels in Cases II and III lie
outside the above computable sufficient regions, the simulation results show
that the closed-loop system remains stable and robust. This indicates that the
computable theoretical bounds are conservative.

\begin{figure}[t]
\centering \includegraphics[scale=0.6]{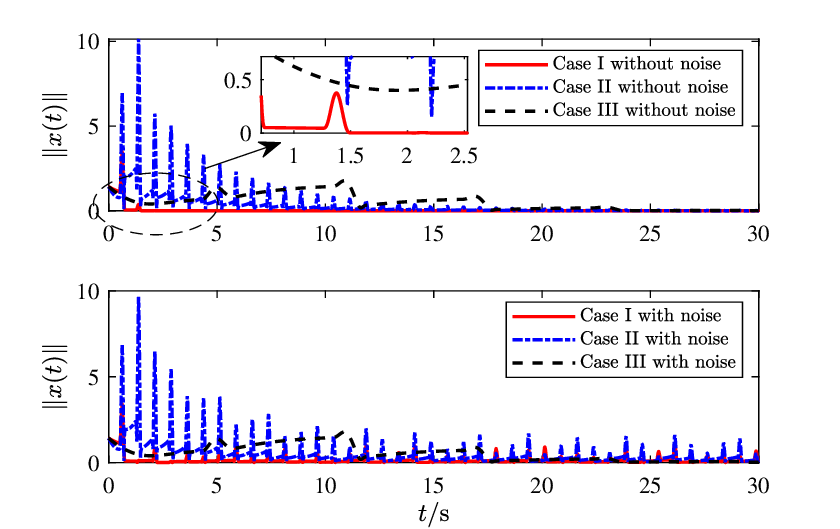}\caption{The state responses for
system (\ref{sys_simu}) without/with noise (top/bottom)}%
\label{figure1}%
\end{figure}

\subsection{Example 2: Discrete-Time Case}

Consider the discretized system corresponding to system (\ref{sys_simu})
\begin{equation}
\left\{
\begin{array}
[c]{rl}%
x\left(  k+1\right)  = & A_{\mathrm{d}}x(k)  +(B_{\mathrm{d}%
0}+\mathit{\Delta}B_{\mathrm{d}})u(  k)  +f_{\mathrm{d}}(k,x),\\
y(k)= & x(k-r)+w_{\mathrm{d}}(k),
\end{array}
\right.  \label{x_ky}%
\end{equation}
where $\mathit{\Delta}B_{\mathrm{d}}=\delta_{\mathrm{bd}}B_{\mathrm{d}0}$,
$B_{\mathrm{d}0}=[T_{\mathrm{s}}^{2}/2,T_{\mathrm{s}}]^{\mathrm{T}}$, and
$f_{\mathrm{d}}(k,x)=-
[T_{\mathrm{s}}^{2}/2,T_{\mathrm{s}}]^{\mathrm{T}}
(\epsilon_{\mathrm{d}}(k)x_{2}(k)
-G\delta_{\mathrm{bd}}\sin(x_{1}(k))/L)$, and
$A_{\mathrm{d}}=\left[
\begin{array}{@{}cc@{}}
1&T_{\mathrm{s}}\\[-0.8mm]
0&1
\end{array}
\right]$,
with $T_{\mathrm{s}}$ being the sampling period. Similar to \cite{Mazenc25tac}%
, take the initial condition $x(0)=[-1,1]^{\mathrm{T}}$.
Here we take $h=2$, $T_{\mathrm{s}}=0.1$, and $w_{\mathrm{d}}%
(k)=0.01(\mathrm{rnd}(2)+[\sin(11kT_{\mathrm{s}}),\sin(12kT_{\mathrm{s}%
})]^{\mathrm{T}})$. We consider three cases. Case I:
$r=1\leq h,\epsilon_{\mathrm{d}}(k)=0,\delta_{\mathrm{bd}}=0$; Case II:
$r=1\leq h,\epsilon_{\mathrm{d}}(k)=0.540591,\delta_{\mathrm{bd}}=0.1$;
Case III:
$r=2\leq h,\epsilon_{\mathrm{d}}(k)=0.540591,\delta_{\mathrm{bd}}=0.1$.

For the considered discrete-time system, $q_{\mathrm{d}}=2$ and
\[
\left\lvert B_{\mathrm{d}0}\right\rvert
=\sqrt{(T_{\mathrm{s}}^{2}/2)^{2}+T_{\mathrm{s}}^{2}}
=0.100125.
\]
Let
$\bar{\epsilon}_{\mathrm{d}}=\sup_{k\in\mathbb{N}}
\lvert\epsilon_{\mathrm{d}}(k)\rvert$. The uncertainty bounds can be taken as
\[
\begin{aligned}
\varepsilon_{\mathrm{d}}
&=\lvert\delta_{\mathrm{bd}}\rvert
\left\lvert B_{\mathrm{d}0}\right\rvert,\\
f_{\mathrm{d}0}
&=\left\lvert B_{\mathrm{d}0}\right\rvert
\sqrt{\bar{\epsilon}_{\mathrm{d}}^{2}
+\left(G\delta_{\mathrm{bd}}/L\right)^{2}}.
\end{aligned}
\]
Taking $D_{M}=h=2$ and using the computable estimates in the
discrete-time counterpart of Remark~\ref{remark_1}, one obtains the sufficient
region
\[
\begin{aligned}
0\leq\varepsilon_{\mathrm{d}}
&<1.07070\times10^{-4},\\
0\leq f_{\mathrm{d}0}
&<
2.56251\times10^{-4}
\left(
1-\frac{\varepsilon_{\mathrm{d}}}{1.07070\times10^{-4}}
\right).
\end{aligned}
\]
Thus, Case I, for which
$\varepsilon_{\mathrm{d}}=f_{\mathrm{d}0}=0$, satisfies these sufficient
bounds. In Cases II and III, the actual uncertainty levels are
\[
\begin{aligned}
\varepsilon_{\mathrm{d}}
&=0.1\times0.100125=0.0100125,\\
f_{\mathrm{d}0}
&=0.100125
\sqrt{0.540591^{2}+(9.81\times0.1)^{2}}
=0.112149,
\end{aligned}
\]
which lie outside the above sufficient region.

The state trajectories are shown in Fig. \ref{figure2}. According to the
theoretical estimate, the convergence time in Case I is $3nh=3\times
2\times2=12$. Fig. \ref{figure2} shows that the closed-loop system remains
robust in the presence of measurement noise. Although the
uncertainty levels in Cases II and III lie outside the above computable
sufficient region, the simulation results show that the closed-loop system
remains stable and robust. This indicates that the computable discrete-time
bounds are conservative.

\begin{figure}[t]
\centering \includegraphics[scale=0.6]{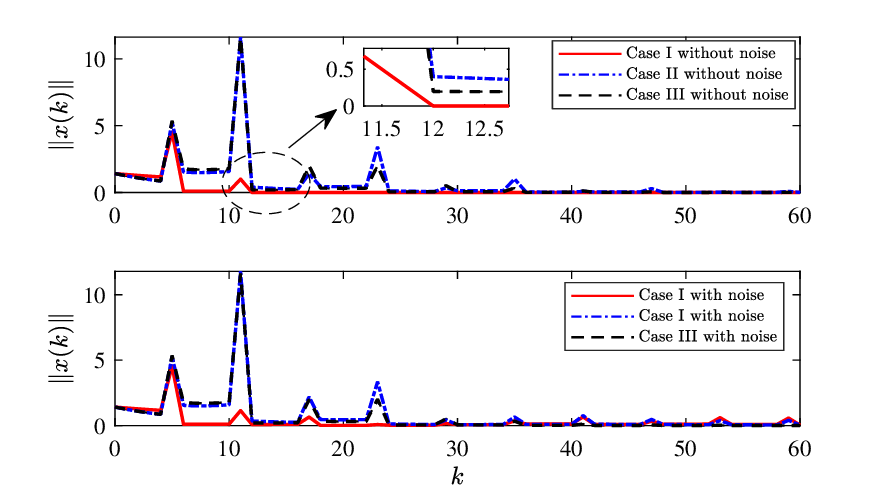}\caption{The state responses for
system (\ref{x_ky}) without/with noise (top/bottom)}%
\label{figure2}%
\end{figure}
\subsection{Example 3: Triple Integrator System}

Consider a triple integrator system (\ref{sys_norm}) with
\[
A=
\begin{bmatrix}
0 & 1 & 0\\
0 & 0 & 1\\
0 & 0 & 0
\end{bmatrix},
\qquad
B_{0}=
\begin{bmatrix}
0\\
0\\
1
\end{bmatrix}.
\]

For this system, we can use Theorem \ref{the_4} to design the controller. In the noise-free case, similar to \cite{Zime25auto}, take
$x(0)=[1,0,0]^{\mathrm{T}}$. The state trajectory is shown on the red line in the top of Fig.
\ref{figure3}. It can be observed that in the presence of unknown measurement
delay bounded by $h=5$ and the delay $\tau_{0}=5$ chosen for simulations, our
closed-loop system converges exactly to zero in time $15$ in the noise-free
case. Note that in the noise-free case with essentially smaller unknown input
delay $0.05$, the hyperexponential controller in \cite{Zime25auto} and the
finite-time controller in \cite{Polyakov11tac} achieve ultimate bounds of $0.01$ and $0.06$ respectively (see Fig. 4 in \cite{Zime25auto}). It is worth noting here that for a general initial condition $x(0)$, the exact convergence time is $3nh = 45$; however, for the specific initial condition $x(0) = [a,0,0]^{\mathrm{T}}$, where $a$ is an arbitrary constant, according to (\ref{x_3lh00}), we have $x(3h) = 0$, i.e., at $15$. For the noisy case with this specific initial condition, it follows from (\ref{x_3lh_1}) that the contribution of the initial state vanishes after $3h$.

In the presence of a small measurement noise $w(t)=0.01v(t)$ with
$v(t)=0.01(\mathrm{rnd}(3)+[\sin(11t),\sin(12t),$ $\sin(13t)]^{\mathrm{T}})$ used
in \cite{Zime25auto}, our controller with delay $\tau
_{0}=5$ leads to an ultimate bound of $0.06$ over the final time interval $[15,200]$ (see the black line in the top of Fig.
\ref{figure3}). A slightly larger ultimate bounds without delay, but with $w(t)=v(t)$ are achieved by
\cite{Zime25auto} (ultimate bound of $0.10$) and \cite{Polyakov11tac}
(ultimate bound of $0.13$) over the final time interval $[6,15]$ (see Fig. 3 in \cite{Zime25auto}). It is seen that without delay, the
controllers of \cite{Zime25auto} and \cite{Polyakov11tac} lead to smaller
ultimate bounds, but our method is superior for large unknown delays.

In the presence of a measurement noise $w(t)=v(t)$, which is used in
\cite{Zime25auto}, our controller with delay $\tau_{0}=5$ leads to an ultimate
bound of $6$ over the final time interval $[15,200]$ for initial conditions
$x(0)=[1,0,0]^{\mathrm{T}}$, $x(0)=[5,0,0]^{\mathrm{T}}$ and $x(0)=[10,0,0]^{\mathrm{T}}$ (see the bottom of Fig.
\ref{figure3}). From
(\ref{ni_A_1}), the contribution of these specific initial states vanishes after $t=15$, so a larger ultimate bound after $t=15$ here is caused by the larger measurement-noise
magnitude in the disturbance term of (\ref{ni_A_1}).

\begin{figure}[t]
\centering \includegraphics[scale=0.6]{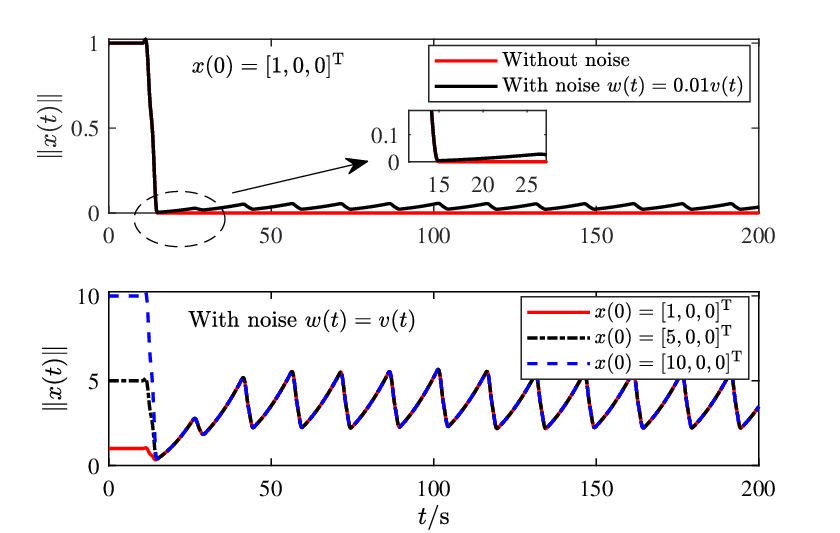}\caption{The state responses for
triple integrator system}%
\label{figure3}%
\end{figure}
\subsection{Example 4: Single Integrator System}

Consider a single integrator system (\ref{scalar}) with $f(t,x)=0$, $B=1$ and $w_{1}(t)=0$.

For convenience, we denote $w_2(t)$ as $w(t)$. For this system, we can use Proposition \ref{corl1} to design the controller.  In the noise-free case, take
three different initial conditions $x(0)=1$, $x(0)=5$ and $x(0)=10$. The state trajectory is shown in the top of Fig.
\ref{figure4}. It can be observed that in the presence of unknown
fast-varying measurement delay bounded by $h=5$ and the fast-varying delay $\tau(t)=4+\sin(20t)$ chosen for simulations, our
closed-loop system converges exactly to zero in time $15$ in the noise-free
case.

In the presence of a measurement noise $w(t)=100v_{3}(t)$, where $v_{3}(t)$  is the third element of $v(t)$ in the previous example, our controller with fast-varying delay $\tau(t)=4+\sin(20t)$ leads to an ultimate
bound of $0.6$ over the final time interval $[15,50]$ for initial conditions
$x(0)=1$, $x(0)=5$ and $x(0)=10$ (see the bottom of Fig. \ref{figure4}).
\begin{figure}[t]
\centering \includegraphics[scale=0.6]{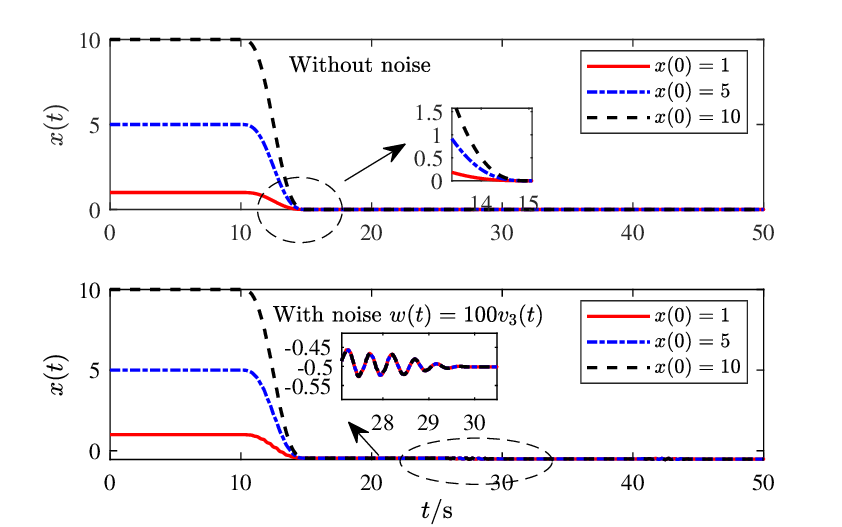}\caption{The state responses for
single integrator system}%
\label{figure4}%
\end{figure}

\section{\label{sec5}Conclusion}

This paper has studied the fixed-time input-to-state stabilization by using artificial delay for a class of continuous-time
and discrete-time systems with unknown constant delay
in the full-state measurement or input. The proposed approach can
handle unknown large constant delays and achieve fixed-time stabilization,
which is not easily attainable with conventional methods. Numerical examples
have demonstrated its effectiveness. When the sampling interval is sufficiently small, the difference between the continuous-time controller and the sampled-data controller can be treated as a small perturbation. Therefore, the proposed method can also be extended to the sampled-data control framework as studied in~\cite{Mazenc25tac}.

\section*{Appendix}

\subsection*{A1: The Proof of Theorem \ref{the_4}}

Denote $\lambda_{1}(t)=w_{1}(t)+f(t,x)$. For $t\in\lbrack3lh,(3l+2)h]$, $l\in%
\mathbb{Z}
_{0}^{+}$, $g(t)=0$. Then the closed-loop system (\ref{sss1}) and (\ref{vv})
can be represented as
$\dot{x}(t)=Ax(t)+\lambda_{1}(t)$,
whose solution is
given by
\begin{equation}
x(t)=\mathrm{e}^{A(t-3lh)}x(3lh)+\int_{3lh}^{t}\mathrm{e}^{A\left(
t-s\right)  }\lambda_{1}(s)\mathrm{d}s. \label{32}%
\end{equation}
For $t\in((3l+2)h,(3l+3)h]$, $l\in%
\mathbb{Z}
_{0}^{+}$, using $g(t)=1$, the closed-loop system is expressed as
\begin{align}
&\dot{x}(t)=  Ax(t)-B_{0}S(t)K(
t)  x(t-h-\tau_{0})+\lambda_{1}(t) \nonumber\\
&  -BS(t)K(t) w_{2}(t-h)-\mathit{\Delta}BS(t)K(t)
x(t-h-\tau_{0})  . \label{33}%
\end{align}
By using the fact that $t-h-\tau_{0}\in(3lh,(3l+2)h]$, substituting (\ref{32})
into (\ref{33}), and employing the variation of constants formula, we have the
solution to
\begin{equation}
x\left(  t\right)  =\left(  \mathrm{e}^{A\left(  t-3lh\right)  }%
\mathit{\Delta}(t)+\mathit{\Theta}(t)\right)  x\left(  3lh\right)
+\mathrm{e}^{A\left(  t-3lh\right)  }\mathit{\Pi}_{w}\left(  t\right)  ,
\label{34}%
\end{equation}
where $\mathit{\Theta}(t)$ is defined in (\ref{ceta_c}) and $\mathit{\Delta
}(t)$ is defined in (\ref{delta}), and
\begin{align*}
&\mathit{\Pi}_{w}(t)=  -\int_{(3l+2)h}^{t}\mathrm{e}^{A(3lh-s)}BS(s)K\left(  s\right)w_{2}(s-h)\mathrm{d}s\\
&  +\int_{3lh}^{t}\mathrm{e}^{A(3lh-s)}\lambda_{1}(s)\mathrm{d}s -\int_{(3l+2)h}^{t}\mathrm{e}^{A(3lh-s)}BS(s)K\left(  s\right) \\
&  \times\int_{3lh}^{s-h-\tau_{0}}\mathrm{e}^{A(s-h-\tau_{0}-r)}\lambda
_{1}(r)\mathrm{d}r\mathrm{d}s.
\end{align*}
From definitions of $\alpha_{1}$ and $\alpha_{2}$ in (\ref{alpha_01}) and
(\ref{alpha_02}), we can get
\[
\left|\mathit{\Pi}_{w}(t)\right|
\leq
\alpha_{1}\left|\lambda_{1}\right|_{[3lh,3(l+1)h]}
+
\alpha_{2}\left|w_{2}\right|_{[3lh,3(l+1)h]}.
\]

From (\ref{34}) and $\mathit{\Delta}((3l+3)h)=I_{n}-\mathrm{e}^{-A\tau_{0}}$,
it follows
\begin{equation}
x\left( (3l+3)h\right)  =Fx\left(  3lh\right)  +\mathrm{e}^{3Ah}\mathit{\Pi
}_{w}\left(  (3l+3)h\right)  . \label{35}%
\end{equation}
where $F$ is defined in (\ref{F_def}).

Based on the above analysis, we can conclude that for $l\geq1,$
\begin{align}
  &\vert x(3lh) \vert \leq  \vert F^{l}x(
0) \vert +\sum_{i=1}^{l}\left \vert F^{l-i}\, \mathrm{e}^{3Ah}\right \vert\nonumber \\
&   \times\left(
\alpha_{1}\left \vert \lambda_{1}\right \vert _{[3(i-1)h,3ih]}+\alpha
_{2}\left\vert w_{2}\right\vert _{[3(i-1)h,3ih]}\right)  .\label{x_3lh_1}%
\end{align}
Then for $t\in\lbrack3lh,(3l+2)h],$
\begin{align}
\left\vert x\left(  t\right)  \right\vert \leq &  \left\vert \mathrm{e}%
^{A\left(  t-3lh\right)  }\right\vert \left\vert x\left(  3lh\right)
\right\vert +\int_{3lh}^{t}\left\vert \mathrm{e}^{A\left(  t-s\right)
}\right\vert \mathrm{d}s\left\vert \lambda_{1}\right\vert _{[3lh,t]}%
\nonumber\\
\leq &  \left\vert \mathrm{e}^{A\left(  t-3lh\right)  }\right\vert \left\vert
F^{l}\right\vert \left\vert x\left(  0\right)  \right\vert +\left\vert \mathrm{e}^{A\left(  t-3lh\right)  }\right\vert \sum_{i=1}%
^{l}\left\vert F^{l-i}\,\mathrm{e}^{3Ah}\right\vert \nonumber\\
&  \times\left(  \alpha_{1}\left\vert \lambda_{1}\right\vert _{[3(i-1)h,3ih]}%
+\alpha_{2}\left\vert w_{2}\right\vert _{[3(i-1)h,3ih]}\right) \nonumber\\
&  +\int_{3lh}^{t}\left\vert \mathrm{e}^{A\left(  t-s\right)  }\right\vert
\mathrm{d}s\left\vert \lambda_{1}\right\vert _{[3lh,t]}, \label{x_11}%
\end{align}
and for $t\in\lbrack(3l+2)h,(3l+3)h],$
\begin{align}
&\vert x(t)\vert  \leq\mathrm{e}^{\vert
A\vert(t-3lh)}\vert \mathit{\Delta}(t)\vert
\vert x(3lh)\vert +\mathrm{e}^{\vert
A\vert(t-3lh)}\vert \mathit{\Pi}_{w}(
t)\vert \nonumber\\
&  +\vert \mathit{\Theta}(t)\vert \vert x(3lh)
\vert \nonumber\\
&  \leq(\mathrm{e}^{3\vert A\vert h}\vert
\mathit{\Delta}(t)\vert +\vert \mathit{\Theta}(t)\vert
) \left(\vert F^{l}\vert \vert x(0)
\vert\right. \nonumber\\
&  +\left.  \sum_{i=1}^{l}\left\vert F^{\,l-i}\,\mathrm{e}^{3Ah}%
\right\vert \left(  \alpha_{1}\left\vert \lambda_{1}\right\vert
_{[3(i-1)h,3ih]}+\alpha_{2}\left\vert w_{2}\right\vert _{[3(i-1)h,3ih]}\right)  \right) \nonumber\\
&  +\mathrm{e}^{\left\vert A\right\vert \left(  t-3lh\right)  }\left(
\alpha_{1}\left\vert \lambda_{1}\right\vert _{[3lh,t]}+\alpha_{2}\left\vert w_{2}\right\vert _{[3lh,t]
}\right)  . \label{x_22}%
\end{align}

By restarting the above estimates from the cycle boundary $3(l-q)h\in[t-3(q+1)h,t]$ and using the periodicity of the controller, the same estimates apply to this moving interval. From (\ref{delta_c}), (\ref{x_11}), and (\ref{x_22}), we can get
\begin{align}
\left\vert x(t)\right\vert
&\leq\rho(q)\left\vert x\right\vert
_{[t-3(q+1)h,t]}\nonumber\\
&\quad+\delta\left(
\left\vert w_{1}\right\vert _{[t-3(q+1)h,t]}
+\left\vert w_{2}\right\vert _{[t-3(q+1)h,t]}\right),
\end{align}
where $t\geq3(q+1)h$. Here, denote
$V(t)=x(t+3(q+1)h)$, it follows that
\begin{align}
\left\vert V(t)\right\vert
&\leq\rho(q)\left\vert V\right\vert
_{[t-3(q+1)h,t]}\nonumber\\
&\quad+\delta\left({
\left\vert w_{1}\right\vert _{[t,t+3(q+1)h]}
+\left\vert w_{2}\right\vert _{[t,t+3(q+1)h]}}\right),\quad t\geq0.
\end{align}
According to $\rho(q)\in(0,1)$ and Lemma 1 in
\cite{Mazen14tac}, we can obtain that
\begin{align}
\left\vert V(t)\right\vert
&\leq\left\vert V\right\vert_{[-3(q+1)h,0]}
\mathrm{e}^{\frac{\ln\left({\rho(q)}\right)t}
{3(q+1)h}}\nonumber\\
&\quad+\frac{\delta\left({
\left\vert w_{1}\right\vert _{[0,t+3(q+1)h]}
+\left\vert w_{2}\right\vert _{[0,t+3(q+1)h]}}\right)}
{\left(1-\rho(q)\right)^{2}}.
\end{align}
If $\mathit{\Delta}B=0$ and $f_{0}=0$, then $F^{q}=0$ and $\rho(q)=0$. Hence, the preceding delay inequality directly gives $|x(t)|\leq\delta(|w_{1}|_{[t-3(q+1)h,t]}+|w_{2}|_{[t-3(q+1)h,t]})$ for $t\geq3(q+1)h$.
Therefore, (\ref{ni_A_1}) is proved.\hfill$\blacksquare$

\subsection*{A2: The Proof of Theorem \ref{the6}}

Denote $\lambda_{\mathrm{d}1}(k)=w_{\mathrm{d}1}(k)+f_{\mathrm{d}}(k,x(k))$.
For $k\in\mathbb{I}_{[3hl,3hl+2h-1]}$, $l\in%
\mathbb{Z}
_{0}^{+}$, $p(k)=0$. Then the closed-loop system (\ref{dis_sys_2}) and
(\ref{d_uu}) can be represented as
$x(k+1)=A_{\mathrm{d}}x(k)+\lambda_{\mathrm{d}1}(k)$,
which implies that
\begin{equation}
x(k+1)=A_{\mathrm{d}}^{k+1-3hl}x(3hl)+%
{\displaystyle\sum_{i=3hl}^{k}}
A_{\mathrm{d}}^{k-i}\lambda_{\mathrm{d}1}(i). \label{x_k11}%
\end{equation}

For $k\in\mathbb{I}_{[3hl+2h,3hl+3h-1]}$, $l\in%
\mathbb{Z}
_{0}^{+}$, from $p(k)=1$, the closed-loop system (\ref{dis_sys_2}) and
(\ref{d_uu}) can be represented as
\begin{align}
  &x(k+1)=  A_{\mathrm{d}}x(k)-B_{\mathrm{d}0}K_{\mathrm{d}}(k)x(k-d_{0}%
-h)+\lambda_{\mathrm{d}1}(k)\nonumber\\
&  -\mathit{\Delta}B_{\mathrm{d}}K_{\mathrm{d}}(k)x(k-d_{0}-h) -B_{\mathrm{d}}K_{\mathrm{d}}(k)w_{\mathrm{d}%
2}(k-h). \label{x_k}%
\end{align}
By using the fact that $3hl\leq k-d_{0}-h\leq3hl+2h-1$, substituting
(\ref{x_k11}) into (\ref{x_k}) yields
\begin{equation}
x(k+1)=(  A_{\mathrm{d}}^{k+1-3hl}\mathit{\Delta}_{\mathrm{d}%
}(k)+\mathit{\Theta}_{\mathrm{d}}(k))  x(3hl)+\mathit{\Pi}_{\mathrm{d}%
w}(k), \label{x_k_1_2}%
\end{equation}
where $\mathit{\Delta}_{\mathrm{d}}(k)$ is defined in (\ref{delta_ds}) and
$\mathit{\Theta}_{\mathrm{d}}(k)$ is defined in (\ref{ceta_ds}), and
\begin{align*}
\mathit{\Pi}_{\mathrm{d}w}(k)=  &
{\displaystyle\sum_{i=3hl}^{k}}
A_{\mathrm{d}}^{k-i}\lambda_{\mathrm{d}1}(i)-%
{\displaystyle\sum_{i=3hl+2h}^{k}}
A_{\mathrm{d}}^{k-i}B_{\mathrm{d}}K_{\mathrm{d}}(i)\\
&  \times%
{\displaystyle\sum_{r=3hl}^{i-d_{0}-h-1}}
A_{\mathrm{d}}^{i-d_{0}-h-1-r}\lambda_{\mathrm{d}1}(r)\\
&  -%
{\displaystyle\sum_{i=3hl+2h}^{k}}
A_{\mathrm{d}}^{k-i}B_{\mathrm{d}}K_{\mathrm{d}}(i)w_{\mathrm{d}2}(i-h).
\end{align*}
It follows from (\ref{alpha_1}) and (\ref{alpha_2}) that
\begin{align}
\left|\mathit{\Pi}_{\mathrm{d}w}(k)\right|
\leq{}&
\alpha_{\mathrm{d}1}
\sup_{m\in[3hl,\,3(l+1)h-1]}
\left|\lambda_{\mathrm{d}1}(m)\right|
\nonumber\\
&+
\alpha_{\mathrm{d}2}
\left|w_{\mathrm{d}2}\right|_{[3hl,\,3(l+1)h-1]}.
\end{align}
For $k=(3l+3)h-1$, we have
\begin{equation}
x((3l+3)h)=F_{\mathrm{d}}x(3lh)+\mathit{\Pi_{\mathrm{d}w}}((3l+3)h-1),
\label{x_3lh}%
\end{equation}
where $F_{\mathrm{d}}$ is defined in (\ref{F_d_def}). In view of
$(A_{\mathrm{d}}-I_{n})^{q_{\mathrm{d}}}=0$ and $h-d_{0}\geq0$, we can get that $I_{n}-A_{\mathrm{d}}^{h-d_{0}}$ is nilpotent, namely,
$\mathit{\Delta}_{\mathrm{d}}^{q_{\mathrm{d}}}((3l+3)h-1)=(I_{n}-A_{\mathrm{d}}^{h-d_{0}%
})^{q_{\mathrm{d}}}=0$.

By iterating (\ref{x_3lh}),
we can get for $l\geq q_{\mathrm{d}}$,
\begin{align}
x\left(  3hl\right) =F_{\mathrm{d}}^{l}x\left(0\right)
+\sum_{i=1}^{l}F_{\mathrm{d}}^{l-i}\,\mathit{\Pi}_{\mathrm{d}w}\left(
3hi-1\right)  . \label{x_3hn}%
\end{align}
Based on the above analysis, we can conclude that for $l\geq1$, $\left\vert
x\left(  3hl\right)  \right\vert \leq g_{\mathrm{d}12}$, where
\begin{align*}
&  g_{\mathrm{d}12}\triangleq\vert F_{\mathrm{d}}\vert
^{l}\vert x(0)\vert +\sum_{i=1}^{l}\vert
F_{\mathrm{d}}^{\,l-i}\vert \\
&  \times\left(  \alpha_{\mathrm{d}1}\sup_{m\in\lbrack3h(i-1)
,3hi-1]}\vert \lambda_{\mathrm{d}1}(m)\vert +\alpha
_{\mathrm{d}2}\vert w_{\mathrm{d}2}\vert _{[3h(i-1),3hi-1]}\right)  .
\end{align*}
Therefore, for $k\in\mathbb{I}_{[3hl,3hl+2h-1]}$, the solution to
(\ref{x_k11}) satisfies
\begin{align*}
&  \left\vert x(k+1)\right\vert \leq\left\vert A_{\mathrm{d}}^{k+1-3hl}%
\right\vert \left\vert x\left(  3hl\right)  \right\vert +%
{\displaystyle\sum_{i=3hl}^{k}}
\left\vert A_{\mathrm{d}}^{k-i}\lambda_{\mathrm{d}1}(i)\right\vert \\
\leq  &  \left\vert A_{\mathrm{d}}^{k+1-3hl}\right\vert g_{\mathrm{d}12}+%
{\displaystyle\sum_{i=3hl}^{k}}
\left\vert A_{\mathrm{d}}^{k-i}\right\vert \sup_{m\in\lbrack3hl,k]}\left\vert
\lambda_{\mathrm{d}1}(m)\right\vert ,
\end{align*}
and for $k\in\mathbb{I}_{[3hl+2h,3hl+3h-1]}$, the solution to (\ref{x_k_1_2})
satisfies
\begin{align*}
&  |x(k+1)|\\
\leq &  \left(  \left\vert A_{\mathrm{d}}^{k+1-3hl}\mathit{\Delta}%
_{\mathrm{d}}\left(  k\right)  \right\vert +\left\vert \mathit{\Theta
}_{\mathrm{d}}(k)\right\vert \right)  \left\vert x\left(  3hl\right)
\right\vert +\left\vert \mathit{\Pi}_{\mathrm{d}w}\left(  k\right)
\right\vert \\
\leq &  \left\vert A_{\mathrm{d}}^{k+1-3hl}\mathit{\Delta}_{\mathrm{d}}\left(
k\right)  \right\vert g_{\mathrm{d}12}+\left\vert \mathit{\Theta}_{\mathrm{d}%
}(k)\right\vert g_{\mathrm{d}12}\\
&  +\alpha_{\mathrm{d}1}\sup_{m\in\lbrack3hl,3(l+1)h-1]}%
\left\vert \lambda_{\mathrm{d}1}(m)\right\vert +\alpha_{\mathrm{d}%
2}\left\vert w_{\mathrm{d}2}\right\vert _{[3hl,3(l+1)h-1]}.
\end{align*}
By restarting the above estimates from the cycle boundary $3(l-q_{\mathrm{d}})h\in\mathbb{I}_{[k-3(q_{\mathrm{d}}+1)h,k]}$ and using the periodicity of the controller, the same estimates apply to this moving interval. From (\ref{delta_dd}) and the above two estimates, we can get, for $l\geq q_{\mathrm{d}}$,
\begin{align}
&\left\vert x(k)\right\vert
\leq\rho_{\mathrm{d}}(q_{\mathrm{d}})
\left\vert x_{[k-3(q_{\mathrm{d}}+1)h,k]}\right\vert
\nonumber\\
&\quad+\delta_{\mathrm{d}}\left({
\left\vert w_{\mathrm{d}1}\right\vert _{[k-3(q_{\mathrm{d}}+1)h,k]}
+\left\vert w_{\mathrm{d}2}\right\vert _{[k-3(q_{\mathrm{d}}+1)h,k]}}\right).
\end{align}
Here, denote
$V(k)=x(k+3(q_{\mathrm{d}}+1)h)$. Similar to the continuous-time case, according to
$\rho_{\mathrm{d}}(q_{\mathrm{d}})\in(0,1)$ and the discrete-time analogue of Lemma 1
in \cite{Mazen14tac} (see also Theorem 2 in \cite{Mazenc24css}), we can obtain that
\begin{align}
&\left\vert V(k)\right\vert
\leq\left\vert V\right\vert_[-3(q_{\mathrm{d}}+1)h,0]
\mathrm{e}^{\frac{\ln\left(
\rho_{\mathrm{d}}(q_{\mathrm{d}})\right)k}
3(q_{\mathrm{d}}+1)h}
\nonumber\\
&\quad+\frac{\delta_{\mathrm{d}}\left(
\left\vert w_{\mathrm{d}1}\right\vert _{[0,k+3(q_{\mathrm{d}}+1)h]}
+\left\vert w_{\mathrm{d}2}\right\vert _{[0,k+3(q_{\mathrm{d}}+1)h]}\right)}
{\left(1-\rho_{\mathrm{d}}(q_{\mathrm{d}})\right)^{2}}.
\end{align}
for all $k\in\mathbf{%
\mathbb{N}
}$. Therefore, (\ref{x_k_1}) holds. The remainder is similar to the continuous-time case and is therefore omitted.

%
%
%

\end{spacing}
\end{document}